\documentclass{amsart}
\usepackage{amsmath,amsfonts,amsthm}
\usepackage{a4wide}
\usepackage{graphics}
\usepackage{verbatim}
\usepackage{sagetex}
\usepackage{tikz,tikz-cd,tikz-3dplot}
\usetikzlibrary{matrix,calc,arrows}

\newcommand{\ccomma}{\raisebox{0.4ex}{,}}

\newcommand{\End}{\mathrm{End}}
\newcommand{\FF}{\mathbb{F}}
\newcommand{\MM}{\mathbb{M}}
\newcommand{\QQ}{\mathbb{Q}}
\newcommand{\RR}{\mathbb{R}}

\newcommand{\SO}{\mathrm{SO}}
\newcommand{\SU}[1]{\mathrm{SU}({#1})}
\newcommand{\SUxSU}[1]{\SU{#1}\times\SU{#1}}

\newcommand{\USp}[1]{\mathrm{USp}({#1})}
\newcommand{\ZZ}{\mathbb{Z}}
\newcommand{\disc}{\mathrm{disc}}
\newcommand{\isom}{\cong}

\newcommand{\fD}{\mathfrak{D}}

\newcommand{\cA}{\mathcal{A}}
\newcommand{\cC}{\mathcal{C}}
\newcommand{\cH}{\mathcal{H}}
\newcommand{\cM}{\mathcal{M}}
\newcommand{\cO}{\mathcal{O}}
\newcommand{\cX}{\mathcal{X}}
\newcommand{\eps}{\varepsilon}

\newcommand{\ignore}[1]{}

\newtheorem{theorem}{Theorem}
\newtheorem{corollary}[theorem]{Corollary}
\newtheorem{lemma}[theorem]{Lemma}
\newtheorem{proposition}[theorem]{Proposition}

%\title{Sato--Tate distributions in genus 2}
\title{On Sato--Tate distributions, extremal traces,\\ and real multiplication in genus~2}
\author{David Kohel and Yih-Dar Shieh}
\date{}

\begin{document}

\maketitle
\begin{abstract}
The vertical Sato--Tate conjectures gives expected trace distributions for
for families of curves.  We develop exact expression for the distribution
associated to degree-$4$ representations of $\USp4$, $\SUxSU2$ and $\SU2$
in the neighborhood of the extremities of the Weil bound.
As a consequence we derive qualitative distinctions between the extremal
traces arising from generic genus-$2$ curves and genus-$2$ curves with real
or quaternionic multiplication.  In particular we show, in a specific sense,
to what extent curves with real multiplication dominate the contribution
to extremal traces.
\end{abstract}

\input{code/sage_input.sage}

%%%%%%%%%%%%%%%%%%%%%%%%%%%%%%%%%%%%%%%%%%%%%%%%%%%%%%%%%%%%%%%%%%%%%%%%%%%%%%%%
%%%%%%%%%%%%%%%%%%%%%%%%%%%%%%%%%%%%%%%%%%%%%%%%%%%%%%%%%%%%%%%%%%%%%%%%%%%%%%%%
\section{Introduction}
%%%%%%%%%%%%%%%%%%%%%%%%%%%%%%%%%%%%%%%%%%%%%%%%%%%%%%%%%%%%%%%%%%%%%%%%%%%%%%%%
%%%%%%%%%%%%%%%%%%%%%%%%%%%%%%%%%%%%%%%%%%%%%%%%%%%%%%%%%%%%%%%%%%%%%%%%%%%%%%%%

Let $C/\FF_q$ be a curve of genus $g$ over the field $\FF_q$ of $q$ elements
and let $J$ be its Jacobian.  The characteristic polynomial of the Frobenius
endomorphism $\pi \in \End(J)$, is of the form
$$
\chi_C(x) = x^{2g} - a_1 x^{2g-1} + a_2 x^{2g-2} + \cdots
+ a_2 q^{g-2} x^2 - a_1 q^{g-1} x + q^g.
$$
We define the normalized Weil polynomial to be the characteristic polynomial
of $\pi \otimes \sqrt{q}^{-1}$ in $\End(J) \otimes_\ZZ \RR$:
$$
\tilde{\chi}_C(x) = x^{2g} - \tilde{a}_1 x^{2g-1} + \tilde{a}_2 x^{2g-2} + \cdots + \tilde{a}_2 x^2 - \tilde{a}_1 x + 1,
$$
with normalized roots $\tilde{\alpha}_1,\dots,\tilde{\alpha}_g,\tilde{\alpha}_{g+1},\dots,\tilde{\alpha}_{2g}$
satisfying
$
\tilde{\alpha}_j\tilde{\alpha}_{j+g} = 1 \mbox{ where } \tilde{\alpha}_j = e^{i\theta_j}.
$
We denote the real numbers $\tilde{\alpha}_j + \tilde{\alpha}_{j+g}$ by $t_j$, and define $(s_1,\dots,s_g)$
to be the symmetric polynomials in $(t_1,\dots,t_g)$.  We call the polynomial
$$
\prod_{i=1}^g (x - t_i) = x^g - s_1 x^{g-1} + s_2 x^{g-2} + \cdots + (-1)^g s_g
$$
the normalized real Weil polynomial of $C/\FF_q$, noting in particular that $s_1 = \tilde{a}_1$
and $s_2 = \tilde{a}_2 - g$.

The Sato--Tate conjecture for an non-CM elliptic curve $C/\QQ$ concerns the equidistribution
with respect to a Haar measure induced by the group $\SU2$ of the Frobenius angles $\theta_1$
or normalized Frobenius traces $t_1$ as $q$ varies over the reductions of $C$ over finite
fields $\FF_q$, and has been generalized to curves of higher genus or abelian varieties of
higher dimension. In higher degree, this is expressed as an equidistribution of the tuples
of Frobenius angles $(\theta_1,\dots,\theta_g)$, of real traces $(t_1,\dots,t_g)$ or of
the symmetric sums $(s_1,\dots,s_g)$ with respect to a induced Haar measure of an
associated compact Lie group, the Sato--Tate group.

These generalizations are typically grouped together as horizontal Sato--Tate conjectures.
Katz introduced a vertical Sato--Tate conjecture concerning the variation
of a family $\cC/S$ of curves, abelian varieties or exponential sums over a base scheme $S$.
Specifically, the vertical Sato--Tate conjectures concerns the limit of the sets of
Frobenius traces associated to the fibers point in $S(\FF_q)$, as $q$ goes to infinity.
One can view the horizontal Sato--Tate conjecture as a statement that Frobenius distribution
for a fiber over a point $S$ follows the equidistribution of the vertical Sato--Tate
distribution.  Whereas the horizontal Sato--Tate conjectures are, in many instances, still
open, the vertical Sato--Tate conjectures are more amenable to proof.
In this work we assume the expected vertical Sato--Tate conjecture and as a consequence
derive a qualitative distinction between extremal traces for quaternionic multiplication (QM),
real multiplication (RM) and generic families,
and when refering to the Sato--Tate group over a finite field we are implicitly
considering the vertical aspect of these conjectures.

In what follows we consider the case of genus-$2$ curves, for which the generic Sato--Tate
group is $\USp4$.  When the Jacobian is split or admits RM over
the base field, the Frobenius endomorphism commutes with this structure, and the
Sato--Tate group is restricted to $\SUxSU2$.  Such families of curves arise when the
base is a cover of a Humbert surface $\cH_D$ over which the RM ring is defined.
The Humbert surface is a moduli space, contained in the $3$-fold $\cM_2$ of moduli
for genus-$2$ curves, classifying isomorphism classes of curves whose Jacobians
admit endomorphisms by a real quadratic order of discriminant $D$.  A special case
is when $D$ is a square, corresponding to orders in the real quadratic ring
$\ZZ[x]/(x^2-x) \isom \ZZ\times\ZZ$ of discriminant $1$.
The embedding of the Sato--Tate group $\SUxSU2$ in $\USp4$ depends on the choice
of Humbert surface (in particular on its discriminant), but the induced Haar measure
on Frobenius traces remains invariant of this embedding.  Over a finite field,
every curve $C/\FF_q$ lies in the image of some Humbert surface, and the objective
of this work is to understand how this stratification of the space $\cM_2$ by
Humbert surfaces $\cH_D$ lets us understand the distributions of normalized Frobenius
traces $s_1$ near the extremities of the Weil interval $[-2g,2g] = [-4,4]$.
To complete the picture of this stratification, we consider the Frobenius
distributions of families with quaternionic multiplication (QM).  These arise
as families over Shimura curves $X^N$, admitting QM by an indefinite order
in a quaternion algebra of discriminant $N$.  Such curves appear as components
of the intersections of Humbert surfaces, and give rise to the Sato--Tate group
$\SU2$ embedded diagonally (up to conjugation) as a subgroup
$\Delta \subset \SUxSU2$ in a degree-$4$ representation.

The study of extremal traces (in particular curves with many points) has a long
history, motivated by applications to coding theory and the rich mathematical
structure going into their study.  In the next section we introduce the
notation for this study before turning to the Weyl integration formulas in
Section~\ref{section:weyl_integration}, which gives an explicit form for the
Haar measures induced by the symplectic groups $\SU2$ and $\USp4$.
In the following Section~\ref{section:taylor_expansions}, we develop precise
Taylor series expressions for the Haar measure induced on the trace function
$s_1$, and devote Section~\ref{section:lachaud} to certain global expressions
for the continuous distribution functions for $\USp4$ and $\SUxSU2$ on $s_1$
obtained by Gilles Lachaud.  As an application to the explicit Taylor series
expansions, in Section~\ref{section:extremal_traces}, we give a qualitative
comparison of the expected contribution of a generic family over $\cM_2$,
of an RM family over some Humbert surface $\cH_D$, and of a QM family over
a Shimura curve $X^N$.  In particular we show that a Shimura curve $X^N$
and associated Sato--Tate group $\Delta$ gives the greatest density of extremal
traces of these groups, but that a Humbert surface of small discriminant
dominates the contribution to extremal traces of genus-$2$ curves.
\vspace{2mm}

\noindent{\bf Acknowledgements.}
This work, presented in a previous instance of AGCT in 2015, grew out of
discussions in the course of the doctoral work of the second author, jointly
supervised by Gilles Lachaud and the first author.  Certain results of Gilles
in the direction of this work are presented in Section~\ref{section:lachaud}.
Gilles' departure was both a deep personal and mathematical loss, and his
impact continues to be felt in work such the present article and through
the international AGCT workshops.

%%%%%%%%%%%%%%%%%%%%%%%%%%%%%%%%%%%%%%%%%%%%%%%%%%%%%%%%%%%%%%%%%%%%%%%%%%%%%%%%
%%%%%%%%%%%%%%%%%%%%%%%%%%%%%%%%%%%%%%%%%%%%%%%%%%%%%%%%%%%%%%%%%%%%%%%%%%%%%%%%
\section{Background and notation}
\label{section:background}
%%%%%%%%%%%%%%%%%%%%%%%%%%%%%%%%%%%%%%%%%%%%%%%%%%%%%%%%%%%%%%%%%%%%%%%%%%%%%%%%
%%%%%%%%%%%%%%%%%%%%%%%%%%%%%%%%%%%%%%%%%%%%%%%%%%%%%%%%%%%%%%%%%%%%%%%%%%%%%%%%

We consider Galois representations in $\USp4$ and its subgroups $\SUxSU2$,
and $\SU2$ arising from families of curves of genus~$2$.  We assume a generic
normalized Weil polynomial takes the form
$$
\widetilde\chi(x) = x^4 - s_1 x^3 + (s_2+2) x^2 - s_1 x + 1,
$$
in particular, $s_1$ represents the normalized trace of a general element.
Under a splitting of the generic normalized Weil polynomial as
$$
(x - e^{i\theta_1})(x - e^{-i\theta_1})(x - e^{i\theta_2})(x - e^{-i\theta_2})
= (x^2 - t_1 x + 1)(x^2 - t_2 x + 1),
$$
we consider the transformation of Haar measures induced by the changes
of variables between local parameters $(\theta_1,\theta_2)$, $(t_1,t_2)$,
and $(s_1,s_2)$ on the respective domains of support $\Theta_2$, $I_2$
and $\Sigma_2$.
In order to focus on the role of the discriminant of the normalized real
polynomial:
$$
(x - 2\cos(\theta_1))(x - 2\cos(\theta_2)) = (x - t_1)(x - t_2) = x^2 - s_1 x + s_2,
$$
we consider $D_0 = (t_1-t_2)^2 = s_1^2 - 4s_2$, and its square root
$\delta_0 = t_2 - t_1$.  Similarly, for a root $\pi$ of $\chi(x)$,
with conjugate $\bar{\pi} = \pi^{-1}$, the relative ring extension
$\ZZ[\pi,\bar{\pi}]/\ZZ[\pi+\bar{\pi}]$ has norm discriminant
$$
D_1 = (4 - t_1^2)(4 - t_2^2)
    = (4 + s_2)^2 - 4s_1^2
    = (4 - 2s_1 + s_2)(4 + 2s_1 + s_2).
$$
As in the above construction, we fix the relations such as $(s_1,s_2)
= (t_1+t_2,t_1t_2)$, and identify $D_0$ and $D_1$ in $\ZZ[t_1,t_2]$
and $\ZZ[s_1,s_2]$, irrespective of the ring parameters over which
they are defined.

The goal of this work is to investigate the asymptotic trace distributions
in the neighborhood of $s_1 = -4$ (or by symmetry, in the neighborhood of
$s_1 = 4$). In particular we investigate the interpretation of the Haar
measure on conjugacy classes via a stratification of the moduli space $\cM_2$,
with generic Sato--Tate group $\USp4$, by the Humbert surfaces $\cH_D$, with
generic Sato--Tate $\SUxSU2$, and the contributions from Shimura
curves, whose associated Sato--Tate group is a diagonal image of $\SU2$.
We recall that the embeddings of these groups determined by the particular
Humbert surface or Shimura curve varies by a conjugation, but the traces
and integration formulas remain invariant.

%%%%%%%%%%%%%%%%%%%%%%%%%%%%%%%%%%%%%%%%%%%%%%%%%%%%%%%%%%%%%%%%%%%%%%%%%%%%%%%%
%%%%%%%%%%%%%%%%%%%%%%%%%%%%%%%%%%%%%%%%%%%%%%%%%%%%%%%%%%%%%%%%%%%%%%%%%%%%%%%%
\section{Weyl integration formula}
\label{section:weyl_integration}
%%%%%%%%%%%%%%%%%%%%%%%%%%%%%%%%%%%%%%%%%%%%%%%%%%%%%%%%%%%%%%%%%%%%%%%%%%%%%%%%
%%%%%%%%%%%%%%%%%%%%%%%%%%%%%%%%%%%%%%%%%%%%%%%%%%%%%%%%%%%%%%%%%%%%%%%%%%%%%%%%

The Weyl integration formula gives an integral expression for the Haar measure
on the space of Frobenius angles $(\theta_1,\dots,\theta_g) \in \Theta_g =
[0,\pi]^g$ or of the space of real traces:
$$
(t_1,\dots,t_g) = (2\cos(\theta_1),\dots,2\cos(\theta_g)) \in I_g = [-2,2]^g.
$$
In particular we focus on $g = 2$ and the distributions induced by subgroups
of $\USp4$.  We recall the form of the Weyl integration formula for the unitary
symplectic group from Weyl~\cite[Theorem 7.8B]{Weyl} (see Katz and Sarnak
\cite[\S5.0.4]{Katz-Sarnak}).

\begin{theorem}[Weyl]
The Haar measure induced by $\USp{2g}$ on the angle space $\Theta_g$ is given
by the formula
$$
\mu_G(\theta_1,\dots,\theta_g) = \frac{2^{g^2}}{g!\pi^g} \prod_{i<j} \big(\cos(\theta_i) - \cos(\theta_j)\big)^2
\prod_{i=1}^g \sin^2(\theta_i) \,d\theta_1 \cdots d\theta_g.
$$
\end{theorem}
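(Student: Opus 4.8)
The plan is to derive the Weyl integration formula for $\USp{2g}$ from the general Weyl integration formula for a compact connected Lie group, specialized to the maximal torus and Weyl group of the symplectic group. First I would fix a maximal torus $T \subset \USp{2g}$ consisting of block-diagonal matrices whose eigenvalues come in conjugate pairs $e^{\pm i\theta_1},\dots,e^{\pm i\theta_g}$, so that a conjugacy class is parametrized by the unordered angles $(\theta_1,\dots,\theta_g) \in [0,\pi]^g$. The general Weyl integration formula states that for a class function $f$,
$$
\int_{\USp{2g}} f\,d\mu = \frac{1}{|W|} \int_T f(t)\,|\Delta(t)|^2\,dt,
$$
where $W$ is the Weyl group and $\Delta(t) = \prod_{\alpha>0}(1 - e^{-\alpha(t)})$ is the Weyl denominator running over the positive roots. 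The core of the computation is therefore to (i) identify $|W|$, and (ii) evaluate $|\Delta(t)|^2$ explicitly in terms of the $\theta_i$.

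For the symplectic group of rank $g$, the Weyl group is the hyperoctahedral group of order $2^g g!$, acting by permutations and sign changes on the angles; this supplies the $1/(g!\,\cdot\,2^g)$ portion of the normalization, with the remaining powers of $2$ coming from the root contributions below. The positive roots of type $C_g$ are the short roots $\theta_i \pm \theta_j$ for $i<j$ and the long roots $2\theta_i$. The key step is then to compute
$$
|\Delta|^2 = \prod_{i<j}\big|1 - e^{i(\theta_i-\theta_j)}\big|^2\big|1 - e^{i(\theta_i+\theta_j)}\big|^2 \prod_{i=1}^g \big|1 - e^{2i\theta_i}\big|^2.
$$
I would simplify the long-root factors using $|1 - e^{2i\theta_i}|^2 = 4\sin^2(\theta_i)$, which immediately produces the $\prod_i \sin^2(\theta_i)$ term. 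For the short-root factors I would combine the $\theta_i-\theta_j$ and $\theta_i+\theta_j$ contributions via the identity $|1 - e^{i(\theta_i-\theta_j)}|^2 |1 - e^{i(\theta_i+\theta_j)}|^2 = 16\,(\cos\theta_i - \cos\theta_j)^2\,\sin^2\theta_i\,\sin^2\theta_j$, or more cleanly by recognizing that $(e^{i\theta_i}+e^{-i\theta_i}) - (e^{i\theta_j}+e^{-i\theta_j}) = 2(\cos\theta_i - \cos\theta_j)$ and tracking how the product over off-diagonal root pairs reorganizes into $\prod_{i<j}(\cos\theta_i - \cos\theta_j)^2$ after the shared $\sin^2$ factors are absorbed. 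Finally I would collect the constant $2^{g^2}$ by carefully counting the factors of $2$ contributed by each root evaluation and the Haar measure normalization on $T$, then divide by $|W| = 2^g g!$ to match the stated $2^{g^2}/(g!\,\pi^g)$.

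The main obstacle I expect is bookkeeping the constant $2^{g^2}/(g!\,\pi^g)$ correctly: each of the $g$ long roots contributes a factor of $4$, each of the $g(g-1)/2$ short-root pairs contributes additional factors of $2$ and of $\sin^2$, and one must verify that the $\sin^2$ factors arising from the short roots either cancel or recombine so that exactly $\prod_i \sin^2(\theta_i)$ survives rather than higher powers. Reconciling the total power of $2$ against $2^{g^2}$ and confirming the $\pi^{-g}$ from normalizing $dt$ over the torus $[0,2\pi]^g$ down to the fundamental domain $[0,\pi]^g$ is the delicate part; the root-system structure of type $C_g$ and the factorization of $|\Delta|^2$ into a Vandermonde-type determinant in the variables $\cos\theta_i$ are what make the final form clean. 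An alternative route, which I would use as a cross-check for $g=2$, is to compute $\Delta(t)$ directly as a Vandermonde determinant in the eigenvalues $e^{\pm i\theta_j}$ of the defining $4$-dimensional representation and verify the resulting measure against the stated formula.
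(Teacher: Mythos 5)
Your overall strategy---the general Weyl integration formula specialized to the type $C_g$ root system, with $|W| = 2^g g!$ the hyperoctahedral group---is the standard route and is essentially how the sources the paper cites establish this result; the paper itself gives no proof, simply referring to Weyl~\cite{Weyl} and Katz--Sarnak~\cite{Katz-Sarnak}. However, the central identity you state is false. Since $|1-e^{i\phi}|^2 = 4\sin^2(\phi/2)$, the two short roots attached to a pair $i<j$ contribute
$$
\big|1 - e^{i(\theta_i-\theta_j)}\big|^2\big|1 - e^{i(\theta_i+\theta_j)}\big|^2
= 16\sin^2\!\Big(\tfrac{\theta_i-\theta_j}{2}\Big)\sin^2\!\Big(\tfrac{\theta_i+\theta_j}{2}\Big)
= 4\big(\cos\theta_i - \cos\theta_j\big)^2,
$$
by the product-to-sum identity $\sin A\sin B = \tfrac12\big(\cos(A-B)-\cos(A+B)\big)$: there are \emph{no} factors $\sin^2\theta_i\sin^2\theta_j$ on the right-hand side, contrary to your claimed $16(\cos\theta_i-\cos\theta_j)^2\sin^2\theta_i\sin^2\theta_j$ (a quick check at $\theta_i=\pi/2$, $\theta_j=\pi/3$ gives $1$ on the left but $3$ for your expression). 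The error is not harmless bookkeeping: if your identity were correct, each index $i$ would acquire $\sin^{2(g-1)}\theta_i$ from the short roots, and together with the long-root factor $|1-e^{2i\theta_i}|^2 = 4\sin^2\theta_i$ the measure would contain $\prod_i\sin^{2g}\theta_i$ rather than $\prod_i\sin^{2}\theta_i$, contradicting the theorem. Your ``main obstacle'' paragraph, which hopes the extra $\sin^2$ factors ``cancel or recombine,'' is chasing a phantom created by this mistake; nothing needs to be absorbed, because the short roots contribute exactly the Vandermonde factor $\prod_{i<j}(\cos\theta_i-\cos\theta_j)^2$ and nothing else.

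With the identity corrected, your plan goes through and the constant falls out cleanly: the long roots give $4^g\prod_i\sin^2\theta_i$, the short-root pairs give $4^{g(g-1)/2}\prod_{i<j}(\cos\theta_i-\cos\theta_j)^2$, so $|\Delta|^2 = 2^{g^2+g}\prod_{i<j}(\cos\theta_i-\cos\theta_j)^2\prod_i\sin^2\theta_i$ (note each short-root pair contributes a factor $4$, not $2$ as you wrote). Dividing by $|W| = 2^g g!$, using the normalized torus measure $d\theta_1\cdots d\theta_g/(2\pi)^g$ on $[0,2\pi]^g$, and folding onto the fundamental domain $[0,\pi]^g$ (a factor $2^g$, since the integrand is invariant under each $\theta_i \mapsto -\theta_i$) yields
$$
\frac{2^{g^2+g}}{2^g\, g!}\cdot\frac{2^g}{(2\pi)^g} \;=\; \frac{2^{g^2}}{g!\,\pi^g}\raisebox{0.4ex}{,}
$$
which is the stated normalization. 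So the architecture of your proof is right and matches the classical argument; the step that fails as written is the evaluation of the short-root contribution, and fixing it also eliminates the difficulty you flagged as the main obstacle.
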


\subsection*{Degree 2}
As a corollary, we specialize to the case $g = 1$, where we find the classic distributions
for the degree-$2$ representations of $\SU2 = \USp2$ arising in the Sato--Tate conjecture
for elliptic curves.  To simplify notation we note $K = \SU2$ and use the same notation
$\mu_K$ for the measure on the spaces $\Theta = [0,\pi]$ and $I = [-2,2]$, distiguishing
the domain by the variable name.

\begin{corollary}%[$\SU2$]
\label{corollary:haar_su2}
The Haar measure induced by $K = \SU2$ on the angle space $\Theta = [0,\pi]$ is given by
the formula
$$
\mu_K(\theta) = \frac{2}{\pi} \sin^2(\theta) d\theta,
$$
and, in terms of the trace $t = 2\cos(\theta)$ in $I = [-2,2]$, the Haar measure takes the form
$$
\mu_K(t) = \frac{1}{2\pi} \sqrt{4-t^2}\,dt.
$$
\end{corollary}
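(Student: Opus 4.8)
The plan is to obtain both displayed formulas directly from the Weyl integration formula of the preceding theorem by specializing to $g=1$ and then applying a one-variable change of variables. Since $\USp2 = K = \SU2$, no new input is needed beyond Weyl's theorem; the corollary is a pure specialization.

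First I would set $g=1$ in the theorem and simplify each factor. The leading constant becomes $2^{g^2}/(g!\,\pi^g) = 2/\pi$. The product $\prod_{i<j}(\cos\theta_i - \cos\theta_j)^2$ ranges over the empty index set and hence equals $1$, while $\prod_{i=1}^g \sin^2(\theta_i)$ collapses to the single factor $\sin^2(\theta)$. This yields the angle-space measure
$$
\mu_K(\theta) = \frac{2}{\pi}\sin^2(\theta)\,d\theta
$$
with no further work.

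Next I would pass from $\theta \in \Theta = [0,\pi]$ to the trace coordinate $t = 2\cos(\theta) \in I = [-2,2]$. Because $\cos$ is strictly monotone on $[0,\pi]$, the assignment $\theta \mapsto t$ is a smooth bijection of $\Theta$ onto $I$, so the change of variables is legitimate and I need only compute the Jacobian. Differentiating gives $dt = -2\sin(\theta)\,d\theta$, hence $|d\theta| = |dt|/(2\sin\theta)$; and since $\cos\theta = t/2$ with $\sin\theta \ge 0$ on $[0,\pi]$, I obtain $\sin\theta = \tfrac12\sqrt{4-t^2}$ and $\sin^2\theta = (4-t^2)/4$. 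Substituting these into the expression for $\mu_K(\theta)$ produces
$$
\mu_K(t) = \frac{2}{\pi}\cdot\frac{4-t^2}{4}\cdot\frac{1}{\sqrt{4-t^2}}\,dt = \frac{1}{2\pi}\sqrt{4-t^2}\,dt,
$$
which is the asserted trace-space measure.

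The computation is entirely routine, and I expect no genuine obstacle. The only point deserving a moment's care is orientation: the map $\theta \mapsto t$ is decreasing, sending $0 \mapsto 2$ and $\pi \mapsto -2$, so one must take the absolute value of the Jacobian when writing the pushforward density on $I$. Tracking this sign correctly (and noting $\sin\theta \ge 0$ to fix the branch of the square root) is the whole of the bookkeeping; the result then follows immediately.
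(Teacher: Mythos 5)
Your proof is correct and is exactly the argument the paper intends (the paper states the corollary without writing the proof out): specialize the Weyl integration formula to $g=1$, where the discriminant product is empty and the constant becomes $2/\pi$, then push forward along $t = 2\cos(\theta)$ using $dt = -2\sin(\theta)\,d\theta$ and $\sin(\theta) = \tfrac{1}{2}\sqrt{4-t^2}$. Your attention to the monotonicity of $\cos$ on $[0,\pi]$ and the absolute value of the Jacobian is the right bookkeeping, and the computation is complete.
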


We define the associated probability density functions on $\Theta$ and $I$ by
$$
f_K(\theta) = \frac{\mu_K(\theta)}{d\theta} = \frac{2}{\pi}\sin^2(\theta) \mbox{ and }
f_K(t) = \frac{\mu_K(t)}{dt} = \frac{1}{2\pi}\sqrt{4-t^2}.
$$
giving well-known density functions of the Sato--Tate conjectures:
\begin{center}
\scalebox{0.33}{\sageplot{f_SU2_theta.plot(x,0,pi,aspect_ratio=3.14)}}\quad
\scalebox{0.33}{\sageplot{f_SU2_trace.plot(x,-2,2,aspect_ratio=4)}}
\end{center}
and associated cumulative density functions
\begin{center}
\begin{tabular}{c}
\scalebox{0.33}{\sageplot{F_SU2_theta.plot(x,0,pi,aspect_ratio=1.57)}}\\
$\displaystyle F_K(\theta) = \int_0^\theta \mu_K(\theta)$
\end{tabular}
\quad
\begin{tabular}{c}
\scalebox{0.33}{\sageplot{F_SU2_trace.plot(x,-2,2,aspect_ratio=2)}}\\
$F_K(t) = \displaystyle\int_{-2}^t \mu_K(t)$
\end{tabular}
\end{center}
In particular, the latter distribution, on the trace space $I$, the cumulative
distribution function measures the contribution of extremal traces in a
neighborhood of $t = -2$, corresponding to elliptic curves with many points.

\subsection*{Degree 4}

From the perspective of Sato--Tate distributions of genus-$2$ curves, we are interested
in the degree-$4$ representations of $\USp4$, of a subgroup $\SUxSU2$, and
of the image of the diagonal map $\Delta: \SU2 \rightarrow \SUxSU2$.

\begin{corollary}%[$\USp4$]
\label{corollary:haar_usp4_theta}
The Haar measure induced by $G = \USp4$ on the angle space $\Theta_2$ is given by the formula
$$
\mu_G(\theta_1,\theta_2) =
  \frac{8}{\pi^2}
    \left(\cos(\theta_1) - \cos(\theta_2)\right)^2
    \sin^2(\theta_1)\sin^2(\theta_2) d\theta_1 d\theta_2.
$$
\end{corollary}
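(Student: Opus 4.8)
The plan is to obtain the claim by direct specialization of the Weyl integration formula stated in the theorem above to the case $g = 2$. First I would evaluate the universal constant $\tfrac{2^{g^2}}{g!\,\pi^g}$ at $g = 2$: here $2^{g^2} = 2^{4} = 16$, the factorial $g! = 2$, and $\pi^g = \pi^2$, so the prefactor collapses to $16/(2\pi^2) = 8/\pi^2$, which matches the stated coefficient.

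Next I would expand the two products appearing in the general formula. The product $\prod_{i<j}(\cos(\theta_i) - \cos(\theta_j))^2$ ranges over pairs $1 \le i < j \le g$; for $g = 2$ the only such pair is $(i,j) = (1,2)$, so this product reduces to the single factor $(\cos(\theta_1) - \cos(\theta_2))^2$. Likewise $\prod_{i=1}^g \sin^2(\theta_i)$ becomes $\sin^2(\theta_1)\sin^2(\theta_2)$. Substituting these two expansions together with the evaluated constant into the formula of the theorem yields precisely the displayed expression for $\mu_G(\theta_1,\theta_2)$ on $\Theta_2 = [0,\pi]^2$.

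Since the corollary is a pure specialization of an already-stated result, there is no genuine analytic obstacle; the only points requiring care are the bookkeeping of the normalizing constant and the observation that the product over pairs contains exactly one term when $g = 2$. As an optional sanity check I would note that the domain $\Theta_2 = [0,\pi]^2$ is the $g=2$ instance of $\Theta_g$ and verify that $\mu_G$ integrates to $1$ over $\Theta_2$, which confirms the coefficient $8/\pi^2$ is correctly normalized.
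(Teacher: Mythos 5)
Your specialization is correct and is exactly how the paper treats this statement: the corollary follows immediately from the Weyl integration formula at $g=2$, with the constant $2^{g^2}/(g!\,\pi^g) = 16/(2\pi^2) = 8/\pi^2$ and the two products collapsing as you describe. The paper offers no further argument, so your proposal matches its (implicit) proof.
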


\noindent
The Haar measure for $\SUxSU2$ is obtained as the product measure for the case $g = 1$.

\begin{corollary}%[$\SUxSU2$]
\label{corollary:haar_su2xsu2_theta}
The Haar measure induced by $H = \SUxSU2$ on the angle space $\Theta_2$ is given by the formula
$$
\mu_H(\theta_1,\theta_2) = \frac{4}{\pi^2} \sin^2(\theta_1)\sin^2(\theta_2) d\theta_1d\theta_2.
$$
\end{corollary}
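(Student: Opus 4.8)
The plan is to exploit the direct-product structure $H = \SUxSU2 = K \times K$ with $K = \SU2$, together with the elementary fact that the normalized Haar measure on a direct product of compact groups is the product of the normalized Haar measures on the two factors. Writing the angle space as $\Theta_2 = \Theta \times \Theta$ with $\Theta = [0,\pi]$, the conjugacy-class map $H \to \Theta_2$ that sends an element to its pair of eigenvalue angles is precisely the product of the two $\SU2$ class maps $K \to \Theta$. Hence the measure induced on $\Theta_2$ by $H$ is the product of the measures induced on each copy of $\Theta$ by $K$, and the statement reduces to the degree-$2$ case already recorded in Corollary~\ref{corollary:haar_su2}.

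Carrying this out, I would first recall from Corollary~\ref{corollary:haar_su2} that the measure induced on a single copy of $\Theta$ is $\mu_K(\theta) = \tfrac{2}{\pi}\sin^2(\theta)\,d\theta$, which integrates to $1$ over $[0,\pi]$. Since the Haar measure on $K \times K$ is $\mu_K \times \mu_K$ and the class map factors as a product, the pushforward to $\Theta_2$ is the product of the individual pushforwards, giving
$$
\mu_H(\theta_1,\theta_2) = \mu_K(\theta_1)\,\mu_K(\theta_2) = \frac{2}{\pi}\sin^2(\theta_1)\cdot\frac{2}{\pi}\sin^2(\theta_2)\,d\theta_1 d\theta_2 = \frac{4}{\pi^2}\sin^2(\theta_1)\sin^2(\theta_2)\,d\theta_1 d\theta_2,
$$
as claimed, with the normalizing constant $4/\pi^2$ consistent with the fact that the result is again a probability measure.

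There is no genuine obstacle here, as this is a true corollary of the $g=1$ computation; the only point meriting a word of justification is that the pushforward of a product measure under a product of maps is the product of the pushforwards. This follows immediately from Fubini's theorem applied to test functions of the form $g(\theta_1)h(\theta_2)$, which span a dense subspace of continuous functions on $\Theta_2$ by Stone--Weierstrass. As an independent consistency check I would compare against the degree-$4$ Weyl formula of Corollary~\ref{corollary:haar_usp4_theta}: the $\USp4$ measure differs from $\mu_H$ exactly by the extra Weyl-denominator factor $(\cos\theta_1 - \cos\theta_2)^2$ arising from the roots that mix the two $\SU2$ factors (together with the change of normalizing constant from $8/\pi^2$ to $4/\pi^2$), and its absence for the reductive subgroup $H$ recovers the product measure above.
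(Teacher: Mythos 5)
Your proposal is correct and follows essentially the same route as the paper, which simply observes that the Haar measure for $H = \SUxSU2$ on $\Theta_2$ is the product of two copies of the $g=1$ measure from Corollary~\ref{corollary:haar_su2}, yielding $\frac{2}{\pi}\sin^2(\theta_1)\cdot\frac{2}{\pi}\sin^2(\theta_2)\,d\theta_1 d\theta_2$. Your additional justification of the pushforward-of-product-measures step (Fubini plus Stone--Weierstrass) and the consistency check against Corollary~\ref{corollary:haar_usp4_theta} are sound but go beyond what the paper records.
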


In what follows we continue to use the notation $G$ for $\USp4$, $H$ for $\SUxSU2$, and
write $\Delta$ for the diagonal image of $\SU2$ in $H$.
Making the change of variables $t_j = 2\cos\theta_j$, such that
$
\sin(\theta_1)\sin(\theta_2) d\theta_1d\theta_2 = dt_1 dt_2,
$
we obtain the expression in $(t_1,t_2)$ for the Haar measure induced by $G$ and $H$:
$$
\begin{array}{r@{\;}c@{\;}l}
\mu_G(t_1,t_2) & = & \displaystyle
       \frac{1}{8\pi^2} (t_1-t_2)^2 \sqrt{(4-t_1^2)(4-t_2^2)}\, dt_1 dt_2
     = \frac{1}{8\pi^2} D_0 \sqrt{D_1}\, dt_1 dt_2,\\[4mm]
\mu_H(t_1,t_2) & = & \displaystyle
       \frac{1}{4\pi^2} \sqrt{(4-t_1^2)(4-t_2^2)}\, dt_1 dt_2
     = \frac{1}{4\pi^2}\sqrt{D_1}\,dt_1 dt_2.
\end{array}
$$
where $D_0 = (t_1-t_2)^2$ and $D_1 = (4 - t_1^2)(4 - t_2^2)$.
These groups of rank $2$ have measure supported on the whole domain $I_2$.
The measure for $\Delta$ is supported on the closed domain $t_1 = t_2$
(or $\theta_1 = \theta_2$ in $\Theta_2$).
We defer the discussion of the trace distribution for $\Delta$ until after
the introduction of suitable transformations of the domain.

\subsection*{Domains of integration}

In order to focus on the behavior of the normalized trace function $s_1 = t_1 + t_2$
of the representation in $\USp4$, and relate the measure to the coefficients of the
normalized Weil polynomial, we first consider the transformation $I_2 \rightarrow
\Sigma_2$, where
$$
\Sigma_2 = \{ (s_1,s_2) \in \RR^2 \;|\; 2|s_1| \le s_2 + 4, 4s_2 \le s_1^2 \},
$$
sending $(t_1,t_2)$ to $(s_1,s_2) = (t_1+t_2,t_1t_2)$.
$$
%\scalebox{0.33}{\sageplot{I2}} \quad\raisebox{23.5mm}{$\longrightarrow$}\quad \scalebox{0.33}{\sageplot{X2}}
\scalebox{0.25}{\sageplot{I2}} \quad\raisebox{17.625mm}{$\longrightarrow$}\quad \scalebox{0.25}{\sageplot{X2}}
$$

As for the discriminants $D_i$, we continue to use the same names for the measures
$\mu_G$ and $\mu_H$ on the spaces $\Theta_2$, $I_2$ and $\Sigma_2$, distinguishing
the domain by the variables names.  In particular, we recall the previously
determined expressions for the discriminants
$$
D_0 = s_1^2 - 4s_2 \mbox{ and } D_1 = (4 + s_2)^2 - 4s_1^2
$$
and, in view of the equivalence of alternating volume forms,
$$
ds_1 ds_2 = (dt_1 + dt_2)(t_1 dt_2 + t_2 dt_1) = (t_1 - t_2) dt_1 dt_2 = \sqrt{D_0}\,dt_1 dt_2,
$$
and a factor of 2 from the double cover $I_2 \rightarrow \Sigma_2$, the induced Haar measure becomes
$$
\mu_G(s_1,s_2) = \frac{1}{4\pi^2} \sqrt{D_0 D_1}\, ds_1 ds_2, \mbox{ and }
\mu_H(s_1,s_2) = \frac{1}{2\pi^2} \sqrt{\frac{D_1}{D_0}}\, ds_1 ds_2.
$$

In order to identify the role of the normalized discriminant $D_0$ of the real subring
$\ZZ[\pi+\bar{\pi}]$ and with the view of obtaining a simple domain of integration for
integrating over the fibers above $s_1$, we set $\delta_0^2 = D_0 = s_1^2 - 4 s_2$
and apply the transformation $(s_1,s_2) \mapsto (s_1,\delta_0)$ from $\Sigma_2$
to the domain
$$
\fD_2 = \left\{ (s_1,\delta_0) \in [-4,4]\times[0,4] \;|\; \delta_0 \pm s_1 \le 4 \right\}.
$$
$$
%\scalebox{0.33}{\sageplot{X2}} \quad\raisebox{23.5mm}{$\longrightarrow$}\quad
%\raisebox{8mm}{\scalebox{0.33}{\sageplot{D2}}}
\scalebox{0.25}{\sageplot{X2}} \quad\raisebox{17.625mm}{$\longrightarrow$}\quad
\raisebox{8mm}{\scalebox{0.25}{\sageplot{D2}}}
$$
By taking the positive branch of $\delta_0 = \sqrt{D_0} \ge 0$, this map is an isomorphism.
Moreover, from $s_1^2 - 4s_2 = \delta_0^2$, the volume forms satisfy
$$
ds_1ds_2 = \frac{1}{2} \delta_0\, d\delta_0 ds_1,
$$
from which we write
$$
\mu_G(s_1,\delta_0) = \frac{1}{8\pi^2} \delta_0^2 \sqrt{D_1}\, d\delta_0 ds_1, \mbox{ and }
\mu_H(s_1,\delta_0) = \frac{1}{4\pi^2} \sqrt{D_1}\, d\delta_0 ds_1,
$$
noting that $D_1$ takes the form 
$
\displaystyle
D_1 = \frac{((4 + s_1)^2 - \delta_0^2)((4 - s_1)^2 - \delta_0^2)}{16}\cdot
$
\vspace{2mm}

\noindent{\bf Remark.}
The formal agreement of the expressions for $\mu_G$ on $I_2$ and $\fD_2$,
$$
\mu_G(t_1,t_2) = \frac{1}{8\pi^2} D_0 \sqrt{D_1}\,dt_1 dt_2
\mbox{ and }
\mu_G(s_1,\delta_0) = \frac{1}{8\pi^2} D_0 \sqrt{D_1}\,d\delta_0 ds_1,
$$
and similarly for $\mu_H$,
$$
\mu_H(t_1,t_2) = \frac{1}{4\pi^2}\sqrt{D_1}\,dt_1 dt_2 \mbox{ and }
\mu_H(s_1,\delta_0) = \frac{1}{4\pi^2} \sqrt{D_1}\, d\delta_0 ds_1,
$$
reflects the fact that $(s_1,\delta_0) = (t_1+t_2,t_1-t_2)$ is a transformation
of determinant~$2$, so $d\delta_0ds_1 = 2dt_1dt_2$, while the domain of
integration $\fD_2$ consists of points $(s_1,\delta_0)$ in the upper half of
the image of points $(t_1,t_2) \in I_2$, such that $\delta_0 = t_1 - t_2 \ge 0$.

In contrast to the above rank-$2$ groups, the group $\Delta$ is a rank-1 group,
for which the support of the Haar measure is restricted to the $1$-dimensional
subspace $\delta_0 = t_1 - t_2 = 0$.
$$
\scalebox{0.25}{\sageplot{D2_zero}}
$$
In fact, pulling back the measure for $\SU2$ by $s_1 = t_1 + t_2 = 2t$ in
Corollary~\ref{corollary:haar_su2}, we obtain the induced Haar measure in 
terms of the trace.

\begin{corollary}
\label{corollary:haar_delta}
The Haar measure induced by $\Delta$ on the trace $s_1 \in [-4,4]$ is given by
$$
\mu_\Delta(s_1) = \frac{1}{8\pi}\sqrt{16 - s_1^2}\,ds_1.
$$
\end{corollary}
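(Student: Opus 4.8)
The plan is to exploit the fact that $\Delta$ is, as a compact Lie group, isomorphic to $\SU2$, so that its induced Haar measure is simply the pushforward of the measure $\mu_K$ of Corollary~\ref{corollary:haar_su2} along the parametrization of the diagonal. First I would record that under the diagonal embedding $\Delta\colon \SU2 \to \SUxSU2$, sending $g$ to $(g,g)$, the degree-$4$ representation of $\USp4$ restricts to the direct sum of two copies of the standard representation of $\SU2$. Consequently an element of $\Delta$ parametrized by the angle $\theta$ has $\theta_1 = \theta_2 = \theta$, hence $t_1 = t_2 = t = 2\cos\theta$, and the normalized trace of the degree-$4$ representation is $s_1 = t_1 + t_2 = 2t$. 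The image of the trace interval $I = [-2,2]$ under $t \mapsto 2t$ is exactly $[-4,4]$, matching the stated domain of support.

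Next I would carry out the one-variable change of variables $t = s_1/2$ in the trace form of $\mu_K$ from Corollary~\ref{corollary:haar_su2}, namely $\mu_K(t) = \frac{1}{2\pi}\sqrt{4-t^2}\,dt$. Substituting gives $4 - t^2 = 4 - s_1^2/4 = (16 - s_1^2)/4$, so that $\sqrt{4-t^2} = \frac{1}{2}\sqrt{16 - s_1^2}$, while $dt = \frac{1}{2}\,ds_1$. Collecting these factors yields
$$
\mu_\Delta(s_1) = \frac{1}{2\pi}\cdot\frac{\sqrt{16-s_1^2}}{2}\cdot\frac{ds_1}{2} = \frac{1}{8\pi}\sqrt{16 - s_1^2}\,ds_1,
$$
which is the asserted formula.

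There is no substantive obstacle here: the result is a direct transport of the $\SU2$ Sato--Tate measure along the linear rescaling of the trace. The only point requiring care is conceptual rather than computational, namely the justification that the measure induced on $\Delta$ coincides with the intrinsic Haar probability measure of $\SU2$; this follows because $\Delta$ is the image of an injective homomorphism from $\SU2$ and the Haar probability measure is unique and invariant under this identification. As an independent check, one may instead start from the angle form $\mu_K(\theta) = \frac{2}{\pi}\sin^2\theta\,d\theta$ and substitute $s_1 = 4\cos\theta$, so that $ds_1 = -4\sin\theta\,d\theta$ and $\sin\theta = \sqrt{16-s_1^2}/4$; then $\sin^2\theta\,|d\theta| = \frac{1}{16}\sqrt{16-s_1^2}\,ds_1$, and multiplying by $\frac{2}{\pi}$ recovers the same density.
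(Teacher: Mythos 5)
Your proof is correct and is essentially the paper's own argument: the paper likewise notes that on $\Delta$ one has $t_1=t_2=t$, hence $s_1 = 2t$, and obtains $\mu_\Delta$ by pulling back the trace measure $\mu_K(t)=\frac{1}{2\pi}\sqrt{4-t^2}\,dt$ of Corollary~\ref{corollary:haar_su2} along this substitution. Your closing consistency check via the angle form $\frac{2}{\pi}\sin^2\theta\,d\theta$ is a harmless addition beyond what the paper does.
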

\noindent
This measure can be viewed as a measure along the subdomain $[-4,4] \times \{0\}
\subset \fD_2$, or as a product distribution on $\fD_2$ such that the measure
in $\delta_0$ is a Dirac delta function with density $1$ on $\delta_0 = 0$.

In what follows we seek to determine analogous expressions for the induced
measure on the trace function $s_1$ in $[-4,4]$, coming from the rank-$2$
groups $G$ and $H$ by integrating in $\delta_0$ along the fibers about $s_1$.
In the next section, we carry this out to develop exact series approximations
for the measures in the neighborhood of $s_1=-4$.

%%%%%%%%%%%%%%%%%%%%%%%%%%%%%%%%%%%%%%%%%%%%%%%%%%%%%%%%%%%%%%%%%%%%%%%%%%%%%%%%
%%%%%%%%%%%%%%%%%%%%%%%%%%%%%%%%%%%%%%%%%%%%%%%%%%%%%%%%%%%%%%%%%%%%%%%%%%%%%%%%
\section{Taylor expansions for the trace function}
\label{section:taylor_expansions}
%%%%%%%%%%%%%%%%%%%%%%%%%%%%%%%%%%%%%%%%%%%%%%%%%%%%%%%%%%%%%%%%%%%%%%%%%%%%%%%%
%%%%%%%%%%%%%%%%%%%%%%%%%%%%%%%%%%%%%%%%%%%%%%%%%%%%%%%%%%%%%%%%%%%%%%%%%%%%%%%%

We now introduce a transformation which permits us to develop a series expansion
for the Haar measure of the trace $s_1$ in the neighborhood of the endpoint
$s_1 = -4$. We first define the space $\Lambda_2 = [0,4] \times [0,1]$ of points
$(\eps,\lambda)$ and a map $\Lambda_2 \rightarrow \fD_2$ given by
$$
(\eps,\lambda) \longmapsto (\eps-4,\eps\lambda).
$$
This gives a parametrization of the left-half subspace of $\fD_2$.
$$
%\scalebox{0.33}{\sageplot{L2}} \quad\raisebox{23.5mm}{$\longrightarrow$}\quad
%\raisebox{8mm}{\scalebox{0.33}{\sageplot{D2}}}
\scalebox{0.25}{\sageplot{L2}} \quad\raisebox{6.50mm}{$\longrightarrow$}\quad
\scalebox{0.25}{\sageplot{D2_half}}
$$
On the space $\Lambda_2$, we find the following expression for $D_1$:
$$
D_1 = \frac{(\eps^2-\delta_0^2)((8-\eps)^2-\delta_0^2)}{16}
    = \frac{\eps^2(1-\lambda^2)((8-\eps)^2-\eps^2\lambda^2)}{16}\ccomma
$$
and setting $\rho = \eps/(8-\eps)$ gives
$$
\sqrt{D_1} = \frac{\eps(8-\eps)}{4}\sqrt{(1-\lambda^2)(1-\rho^2 \lambda^2)},
$$
Using $(s_1,\delta) = (\eps-4,\eps\lambda)$, it follows that
$
d\delta_0ds_1 = (\eps d\lambda + \lambda d\eps)d\eps = \eps d\lambda d\eps,
$
and we find
%$$
\begin{equation}
\label{equation:mu_epsilon-lambda_G}
\mu_G(\eps,\lambda) =
    \frac{\eps^4(8-\eps)}{32\pi^2}
    \left(\lambda^2 \sqrt{(1-\lambda^2)(1-\rho^2 \lambda^2)}
    \, d\lambda\right) d\eps,
\end{equation}
%$$
and
%$$
\begin{equation}
\label{equation:mu_epsilon-lambda_H}
\mu_H(\eps,\lambda) = \frac{\eps^2(8-\eps)}{16\pi^2}
  \left(\sqrt{(1-\lambda^2)(1-\rho^2 \lambda^2)}\,d\lambda\right) d\eps.
\end{equation}
%$$
In a neighborhood of $\eps = 0$, we have $\rho = \eps/8 + O(\eps^2)$, and
therefore $1 - \rho^2\lambda^2 = 1 + O(\eps^2)$, giving
%$$
\begin{equation}
\label{equation:mu_epsilon-lambda_approx}
\mu_G(\eps,\lambda) \approx \frac{\eps^4(8-\eps)}{32\pi^2} \left(\lambda^2 \sqrt{1-\lambda^2}\,d\lambda\right) d\eps
%$$
\mbox{ and }
%$$
\mu_H(\eps,\lambda) \approx \frac{\eps^2(8-\eps)}{16\pi^2}
  \left(\sqrt{1-\lambda^2}\,d\lambda\right) d\eps.
\end{equation}
%$$
From the values of the inner integrals,
$$
\int_0^1 \lambda^2 \sqrt{1-\lambda^2}\, d\lambda = \frac{\pi}{16} \mbox{\ \ and }
\int_0^1 \sqrt{1-\lambda^2}\, d\lambda = \frac{\pi}{4}\ccomma
$$
we obtain the trace distributions near $\eps = 0$.
\begin{theorem}
\label{theorem:haar_GH_eps_approx}
The Haar measures associated to the trace function on the groups $G = \USp4$ and
$H = \SUxSU2$ have the following approximations at $\eps = s_1 + 4 = 0$.
%$$
\begin{equation}
\mu_G(\eps) = \left(\frac{\eps^4(8-\eps)}{512\pi} + O(\eps^6)\right) d\eps
  = \left(\frac{\eps^4}{64\pi} - \frac{\eps^5}{512\pi} + O(\eps^6)\right) d\eps
\end{equation}
%$$
and
%$$
\begin{equation}
\mu_H(\eps) = \left(\frac{\eps^2(8-\eps)}{64\pi} + O(\eps^4)\right) d\eps
  = \left(\frac{\eps^2}{8\pi} - \frac{\eps^3}{64\pi} + O(\eps^4)\right) d\eps.
\end{equation}
%$$
\end{theorem}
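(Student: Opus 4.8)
The plan is to push each joint measure forward to a measure in the single variable $\eps$ by integrating out the fibre coordinate $\lambda$, and then to extract the leading behaviour as $\eps \to 0$ by a binomial expansion. I would begin from the exact expressions \eqref{equation:mu_epsilon-lambda_G} and \eqref{equation:mu_epsilon-lambda_H}. Since the prefactors $\eps^4(8-\eps)/(32\pi^2)$ and $\eps^2(8-\eps)/(16\pi^2)$ do not depend on $\lambda$, they factor out of the fibre integral, and the problem reduces to evaluating
$$
\int_0^1 \lambda^2\sqrt{(1-\lambda^2)(1-\rho^2\lambda^2)}\,d\lambda
\quad\text{and}\quad
\int_0^1 \sqrt{(1-\lambda^2)(1-\rho^2\lambda^2)}\,d\lambda,
$$
where $\rho = \eps/(8-\eps)$, up to an error controlled by a power of $\eps$.

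Next I would expand $\sqrt{1-\rho^2\lambda^2} = 1 - \tfrac{1}{2}\rho^2\lambda^2 + \cdots$. Because $\rho = \eps/8 + O(\eps^2)$ and $\lambda$ ranges over $[0,1]$, the quantity $\rho^2\lambda^2$ is bounded by a fixed multiple of $\eps^2$ across the entire range of integration, so each integrand equals its value at $\rho = 0$ plus an $O(\eps^2)$ term that is uniform in $\lambda$. This uniformity is the only point that genuinely needs care, and it is what I expect to be the main obstacle, modest though it is: it is what licenses pulling the $O(\eps^2)$ correction through the convergent integrals. Once this is noted, the remaining integrals are the Beta-type values $\int_0^1 \lambda^2\sqrt{1-\lambda^2}\,d\lambda = \pi/16$ and $\int_0^1 \sqrt{1-\lambda^2}\,d\lambda = \pi/4$ already recorded above.

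Finally I would reassemble the factors. Multiplying the inner values $\pi/16 + O(\eps^2)$ and $\pi/4 + O(\eps^2)$ by the respective prefactors gives $\mu_G(\eps) = \big(\eps^4(8-\eps)/(512\pi) + O(\eps^6)\big)\,d\eps$ and $\mu_H(\eps) = \big(\eps^2(8-\eps)/(64\pi) + O(\eps^4)\big)\,d\eps$, where the prefactor $\eps^4$ (respectively $\eps^2$) promotes the inner $O(\eps^2)$ error to $O(\eps^6)$ (respectively $O(\eps^4)$). Expanding $\eps^4(8-\eps) = 8\eps^4 - \eps^5$ and $\eps^2(8-\eps) = 8\eps^2 - \eps^3$ then yields the two-term forms in the statement. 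Beyond the uniformity remark, I anticipate no difficulty: the whole computation has effectively been set up by the change of variables $(\eps,\lambda)$, and what remains is bookkeeping of the powers of $\eps$.
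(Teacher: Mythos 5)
Your proposal is correct and takes essentially the same route as the paper: the paper likewise starts from the exact fibred expressions \eqref{equation:mu_epsilon-lambda_G} and \eqref{equation:mu_epsilon-lambda_H}, replaces $\sqrt{1-\rho^2\lambda^2}$ by $1+O(\eps^2)$ (its equation \eqref{equation:mu_epsilon-lambda_approx}), and integrates against the values $\int_0^1 \lambda^2\sqrt{1-\lambda^2}\,d\lambda = \pi/16$ and $\int_0^1 \sqrt{1-\lambda^2}\,d\lambda = \pi/4$ before expanding $\eps^k(8-\eps)$. Your explicit remark that the $O(\eps^2)$ error is uniform in $\lambda$ (since $\rho \le \eps/4$ on the whole fibre) is the one step the paper passes over silently, but it is the same argument.
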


In order to simplify compute the full Taylor expansions, we begin by recalling
the definition of the Catalan numbers
$$
C_n = \frac{1}{n+1}{2n \choose n} = {2n \choose n} - {2n \choose n + 1}\cdot
$$
noting that in particular $C_0 = C_1 = 1$. With this definition we can state
a lemma concerning the form of a class of integrals needed for our series
approximation.
\begin{lemma}
\label{lemma:catalan_integrals}
$$
\int_0^1 \lambda^{2n} \sqrt{1-\lambda^2}\, d\lambda = \frac{\pi}{4}\cdot\frac{C_n}{4^n}\cdot
$$
\end{lemma}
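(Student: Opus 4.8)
The plan is to reduce the integral to a standard Wallis integral and then to recognize the Catalan number through elementary binomial identities, landing directly on the normalization $C_n/4^n$ demanded by the statement. First I would make the substitution $\lambda = \sin\theta$, under which $\sqrt{1-\lambda^2} = \cos\theta$, $d\lambda = \cos\theta\,d\theta$, and the limits $\lambda \in [0,1]$ become $\theta \in [0,\pi/2]$; this converts the integral into
$$
\int_0^{\pi/2} \sin^{2n}\theta\,\cos^2\theta\, d\theta.
$$
Writing $\cos^2\theta = 1 - \sin^2\theta$ then expresses this as the difference $W_{2n} - W_{2n+2}$ of two consecutive even Wallis integrals $W_m = \int_0^{\pi/2}\sin^m\theta\,d\theta$.

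Next I would invoke the classical closed form $W_{2n} = \frac{\pi}{2}\cdot 4^{-n}\binom{2n}{n}$, which follows from the reduction formula $W_m = \frac{m-1}{m}W_{m-2}$ together with $W_0 = \pi/2$. Substituting this for both $W_{2n}$ and $W_{2n+2}$ reduces the claim to a purely combinatorial identity, so that no further analysis is needed.

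The final step is the combinatorial simplification, which I would arrange to reproduce the form $C_n = \binom{2n}{n} - \binom{2n}{n+1}$ recorded above. Using Pascal's rule $\binom{2n}{n} + \binom{2n}{n+1} = \binom{2n+1}{n+1}$ together with the symmetry-based doubling $\binom{2n+2}{n+1} = 2\binom{2n+1}{n+1}$, one checks that $W_{2n+2} = \frac{\pi}{4}\cdot 4^{-n}\big(\binom{2n}{n} + \binom{2n}{n+1}\big)$, while $W_{2n} = \frac{\pi}{4}\cdot 4^{-n}\cdot 2\binom{2n}{n}$. Subtracting the two gives
$$
\frac{\pi}{4}\cdot 4^{-n}\Big(\binom{2n}{n} - \binom{2n}{n+1}\Big) = \frac{\pi}{4}\cdot\frac{C_n}{4^n},
$$
as claimed.

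I do not expect any genuine obstacle here: the integral is a textbook Beta-function evaluation, equivalently $I_n = \tfrac{1}{2}B(n+\tfrac12,\tfrac32) = \frac{\Gamma(n+\tfrac12)\Gamma(\tfrac32)}{2\,\Gamma(n+2)}$, and the only point requiring a little care is choosing the combinatorial route so that the Catalan number appears in exactly the normalization $C_n/4^n$ rather than in a disguised form. An equally short alternative is to evaluate the Beta integral directly via $\Gamma(n+\tfrac12) = 4^{-n}(2n)!\,\sqrt{\pi}/n!$ and $\Gamma(\tfrac32) = \tfrac12\sqrt{\pi}$, but the Wallis route keeps everything within elementary manipulations and matches the combinatorial definition of $C_n$ already in hand.
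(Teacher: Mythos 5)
Your proof is correct, and every step checks out: the substitution $\lambda=\sin\theta$ gives $\int_0^{\pi/2}\sin^{2n}\theta\cos^2\theta\,d\theta = W_{2n}-W_{2n+2}$, the Wallis closed form $W_{2n}=\frac{\pi}{2}\cdot 4^{-n}\binom{2n}{n}$ is right, and your two identities $W_{2n}=\frac{\pi}{4}4^{-n}\cdot 2\binom{2n}{n}$ and $W_{2n+2}=\frac{\pi}{4}4^{-n}\bigl(\binom{2n}{n}+\binom{2n}{n+1}\bigr)$ (via $\binom{2n+2}{n+1}=2\binom{2n+1}{n+1}$ and Pascal's rule) subtract to exactly $\frac{\pi}{4}\cdot\frac{C_n}{4^n}$ in the paper's preferred form $C_n=\binom{2n}{n}-\binom{2n}{n+1}$. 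Note that the paper states this lemma without any proof, treating it as a standard fact (it is a textbook Wallis/Beta evaluation, as you observe in your closing remark), so your argument fills a gap rather than parallels an existing one; of your two suggested routes, the Wallis computation you carried out is the more self-contained since it stays within the elementary binomial identities and the combinatorial definition of $C_n$ already fixed in the paper, while the Beta-function route is shorter but imports the duplication formula $\Gamma(n+\tfrac12)=4^{-n}(2n)!\sqrt{\pi}/n!$.
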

\noindent
We finally recall the form of a Taylor series for the square root.
\begin{lemma}
\label{lemma:catalan_sqrt}
The Taylor expansion for $\sqrt{1 - x}$ is given by the power series
$$
\sqrt{1 - x} = 1 - \frac{x}{2} \sum_{n=0}^\infty \frac{C_n} {4^n} x^n.
$$
\end{lemma}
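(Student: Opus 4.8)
\emph{Proof proposal.} The plan is to recognize this as a repackaging of the generalized binomial theorem: the right-hand side has $x^0$-coefficient $1$ and, for $n \ge 0$, an $x^{n+1}$-coefficient equal to $-C_n/(2\cdot 4^n)$, so once $\sqrt{1-x}$ is expanded as a power series the statement collapses to a single closed-form identity relating $\binom{1/2}{k}$ to a Catalan number. First I would write
$$
\sqrt{1-x} = (1-x)^{1/2} = \sum_{k=0}^\infty \binom{1/2}{k}(-x)^k,
$$
valid for $|x| \le 1$. Evaluating the generalized binomial coefficient for $k \ge 1$ gives $\binom{1/2}{k} = (-1)^{k-1}(2k-3)!!/(2^k\,k!)$, where $(2k-3)!! = 1\cdot 3\cdots(2k-3)$, so the coefficient of $x^k$ in $\sqrt{1-x}$ is $-(2k-3)!!/(2^k\,k!)$.

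Reindexing by $k = n+1$, the lemma then reduces to the single identity
$$
\frac{(2n-1)!!}{2^{\,n+1}(n+1)!} = \frac{C_n}{2\cdot 4^n}\ccomma
$$
which I would verify directly from $C_n = \binom{2n}{n}/(n+1) = (2n)!/\big((n+1)!\,n!\big)$ together with the double-factorial splitting $(2n)! = (2n)!!\,(2n-1)!! = 2^n\,n!\,(2n-1)!!$; substituting the latter into the former yields $C_n = 2^n(2n-1)!!/(n+1)!$, which is exactly the required relation after clearing denominators. This is entirely elementary bookkeeping, so I expect no genuine obstacle here.

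A cleaner alternative route, which I would mention as a one-line check, is to start from the Catalan generating function $\sum_{n=0}^\infty C_n x^n = (1-\sqrt{1-4x})/(2x)$; solving for the radical gives $\sqrt{1-4x} = 1 - 2x\sum_{n=0}^\infty C_n x^n$, and the substitution $x \mapsto x/4$ produces the stated expansion immediately. The only point that genuinely deserves a word of care is the legitimacy of the term-by-term identity at the endpoint $x = 1$: since $C_n/4^n \sim 1/(\sqrt{\pi}\,n^{3/2})$, the series $\sum_n C_n/4^n$ converges absolutely, so the expansion is valid on the full closed interval $|x| \le 1$. This endpoint validity is precisely what is needed to justify integrating the expansion of $\sqrt{(1-\lambda^2)(1-\rho^2\lambda^2)}$ term by term against $\lambda^{2n}$ in the subsequent Taylor computations, where the argument of the square root approaches $1$ as $\lambda \to 1$ and $\rho \to 1$.
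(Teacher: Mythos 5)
Your proposal is correct. Note, however, that the paper offers no proof of this lemma at all: it is introduced with ``We finally recall the form of a Taylor series for the square root,'' i.e.\ treated as a standard fact, so there is no argument to compare yours against. Both of your routes are sound: the binomial-series computation, with the identity $C_n = 2^n(2n-1)!!/(n+1)!$ obtained from $(2n)! = 2^n\,n!\,(2n-1)!!$, checks out, and the generating-function route $\sqrt{1-4x} = 1 - 2x\sum_{n\ge 0}C_n x^n$ followed by $x \mapsto x/4$ is the standard one-line derivation. Your closing remark is also well taken and goes slightly beyond what the paper makes explicit: the absolute summability $\sum_n C_n/4^n < \infty$ (from $C_n/4^n \sim 1/(\sqrt{\pi}\,n^{3/2})$, the same asymptotic the paper invokes in its remark after Theorem~\ref{theorem:haar_GH_eps}) gives uniform convergence of the series on the closed interval $[0,1]$, which is exactly what licenses the term-by-term integration in equations~\eqref{equation:integral_G} and~\eqref{equation:integral_H}, including at the boundary value $\rho = 1$ (i.e.\ $\eps = 4$).
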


\noindent
Applying Lemma~\ref{lemma:catalan_sqrt} to $\sqrt{1 - \rho^2\lambda^2}$, followed
by Lemma~\ref{lemma:catalan_integrals} to the resulting integral summands, we
obtain exact Taylor series expansions for the integrals
%$$
\begin{equation}
\label{equation:integral_G}
\begin{array}{r@{}l}
\displaystyle
\int_0^1 & \lambda^2 \sqrt{(1 - \lambda^2)(1 - \rho^2\lambda^2)}\,d\lambda\\
& = \displaystyle
  \int_0^1 \lambda^2 \sqrt{(1 - \lambda^2)}\,d\lambda
  \ -\ \frac{\rho^2}{2} \sum_{n=0}^\infty \frac{C_n}{4^n}\left(
    \int_0^1 \lambda^{2(n+2)}\sqrt{1 - \lambda^2}\,d\lambda\right)\rho^{2n}\\[4mm]
& = \displaystyle
  \frac{\pi}{16} \left( 1 -  \frac{\rho^2}{8} \sum_{n=0}^\infty \frac{C_nC_{n+2}}{16^n}\rho^{2n} \right)\!\ccomma
\end{array}
\end{equation}
%$$
and
%$$
\begin{equation}
\label{equation:integral_H}
\begin{array}{r@{}l}
\displaystyle
\int_0^1 & \sqrt{(1 - \lambda^2)(1 - \rho^2\lambda^2)}\,d\lambda\\
& = \displaystyle
  \int_0^1 \sqrt{(1 - \lambda^2)}\,d\lambda
  \ -\ \frac{\rho^2}{2} \sum_{n=0}^\infty \frac{C_n}{4^n}\left(
    \int_0^1 \lambda^{2(n+1)}\sqrt{1 - \lambda^2}\,d\lambda\right)\rho^{2n}\\[4mm]
& = \displaystyle
  \frac{\pi}{4} \left( 1 -  \frac{\rho^2}{8} \sum_{n=0}^\infty \frac{C_nC_{n+1}}{16^n}\rho^{2n} \right)\!\cdot
\end{array}
\end{equation}
%$$
Substituting equations~\eqref{equation:integral_G} and~\eqref{equation:integral_H}
into equations \eqref{equation:mu_epsilon-lambda_G} and~\eqref{equation:mu_epsilon-lambda_H}
for the Haar measures arising from $G = \USp4$ and $H = \SUxSU2$, gives
the following theorem.
\begin{theorem}
\label{theorem:haar_GH_eps}
With the notation as above, the Taylor expansions for the trace on $G$ and $H$,
in the neighborhood of $\eps = s_1 + 4 = 0$ are given by
%$$
\begin{equation}
\mu_G(\eps) = \int_{\lambda=0}^1 \mu_G(\eps,\lambda) = \frac{\eps^4(8-\eps)}{512\pi}
    \left( 1 -  \frac{\rho^2}{8} \sum_{n=0}^\infty \frac{C_nC_{n+2}}{16^n}\rho^{2n} \right) d\eps,
\end{equation}
%$$
and
%$$
\begin{equation}
\mu_H(\eps) = \int_0^1 \mu_H(\eps,\lambda) = \frac{\eps^2(8-\eps)}{64\pi}
   \left( 1 -  \frac{\rho^2}{8} \sum_{n=0}^\infty \frac{C_nC_{n+1}}{16^n}\rho^{2n} \right) d\eps.
\end{equation}
%$$
\end{theorem}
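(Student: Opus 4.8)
The plan is to obtain $\mu_G(\eps)$ and $\mu_H(\eps)$ by integrating the two-variable measures of \eqref{equation:mu_epsilon-lambda_G} and \eqref{equation:mu_epsilon-lambda_H} over the fiber $\lambda \in [0,1]$ at fixed $\eps$, since by construction $\mu_G(\eps) = \int_{\lambda=0}^1 \mu_G(\eps,\lambda)$ and likewise for $H$. First I would note that in these expressions the prefactors $\eps^4(8-\eps)/(32\pi^2)$ and $\eps^2(8-\eps)/(16\pi^2)$, together with the quantity $\rho = \eps/(8-\eps)$, depend only on $\eps$ and are therefore constant along the fiber; they pull outside the $\lambda$-integral. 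This reduces each computation to one of the two $\lambda$-integrals $\int_0^1 \lambda^2\sqrt{(1-\lambda^2)(1-\rho^2\lambda^2)}\,d\lambda$ and $\int_0^1 \sqrt{(1-\lambda^2)(1-\rho^2\lambda^2)}\,d\lambda$, viewed as functions of the parameter $\rho$.

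The second step is to substitute the closed forms already obtained in \eqref{equation:integral_G} and \eqref{equation:integral_H}, namely $\frac{\pi}{16}(1 - \frac{\rho^2}{8}\sum_n C_nC_{n+2}\rho^{2n}/16^n)$ and $\frac{\pi}{4}(1 - \frac{\rho^2}{8}\sum_n C_nC_{n+1}\rho^{2n}/16^n)$. Multiplying through by the respective prefactors and collecting the numerical constants $\frac{1}{32\pi^2}\cdot\frac{\pi}{16} = \frac{1}{512\pi}$ and $\frac{1}{16\pi^2}\cdot\frac{\pi}{4} = \frac{1}{64\pi}$ yields exactly the two displayed formulas. To justify calling these Taylor expansions near $\eps = 0$, I would add the observation that $\rho = \eps/(8-\eps)$ is analytic at $\eps = 0$ with $\rho = \eps/8 + O(\eps^2)$, so the even power series in $\rho$ recomposes into a convergent power series in $\eps$; retaining only the constant term of the bracket recovers the leading-order statement of Theorem~\ref{theorem:haar_GH_eps_approx}.

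I do not expect this final assembly to be the hard part: once \eqref{equation:integral_G} and \eqref{equation:integral_H} are in hand, the theorem is bookkeeping of constants. The substantive work sits upstream, in the evaluation of those two $\lambda$-integrals, whose key steps are to expand $\sqrt{1-\rho^2\lambda^2}$ by the binomial series of Lemma~\ref{lemma:catalan_sqrt}, to interchange this series with the integral against $\sqrt{1-\lambda^2}\,d\lambda$ (legitimate by uniform convergence on $[0,1]$ for $|\rho| < 1$, which covers the neighborhood of $\eps = 0$ since $\rho \to 0$ there), and then to evaluate each summand $\int_0^1 \lambda^{2(n+k)}\sqrt{1-\lambda^2}\,d\lambda = \frac{\pi}{4}\cdot\frac{C_{n+k}}{4^{n+k}}$ via Lemma~\ref{lemma:catalan_integrals}. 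It is precisely this last evaluation that converts the single Catalan factor of the square-root expansion into the products $C_nC_{n+2}$ and $C_nC_{n+1}$ appearing in the statement. Granting those identities, which are established in the text preceding the theorem, the result follows immediately.
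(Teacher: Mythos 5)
Your proposal is correct and follows exactly the paper's own route: integrate the fibered measures of \eqref{equation:mu_epsilon-lambda_G} and \eqref{equation:mu_epsilon-lambda_H} over $\lambda \in [0,1]$, substitute the series evaluations \eqref{equation:integral_G} and \eqref{equation:integral_H} (themselves obtained from Lemma~\ref{lemma:catalan_sqrt} and Lemma~\ref{lemma:catalan_integrals}), and collect the constants $\frac{1}{32\pi^2}\cdot\frac{\pi}{16} = \frac{1}{512\pi}$ and $\frac{1}{16\pi^2}\cdot\frac{\pi}{4} = \frac{1}{64\pi}$. Your added remarks on uniform convergence justifying the term-by-term integration and on $\rho$ being analytic at $\eps = 0$ are correct refinements that the paper leaves implicit.
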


\noindent{\bf Remark.} As a consequence of Stirling's formula, the Catalan numbers
are known to satisfy the asymptotic growth
$$
\frac{C_n}{4^n} \sim \frac{1}{\sqrt{\pi}\,n^{3/2}}\ccomma
$$
from which we can conclude the convergence of the above formulas for $\eps$ in
the interval $[0,4]$, since $0 \le \rho = \eps/(8-\eps) \le 1$. This implies
convergence for the trace $s_1$ in the interval $[-4,0]$, and by symmetry around
$s_1 = 0$, extending to the interval $[-4,4]$.
\vspace{2mm}

We conclude this section with the analogous series expansion for the degree-$4$
diagonal subgroup $\Delta$.

\begin{theorem}
\label{theorem:haar_delta_eps}
The Haar measure induced by $\Delta$ in the neighborhood of $\eps = s_1 + 4 = 0$
is given by
$$
\mu_\Delta(\eps)
  = \frac{1}{\sqrt{8}\,\pi} \sqrt{\eps(1 - \eps/8)}
  = \frac{\eps^{1/2}}{\sqrt{8}\,\pi} \left(
  1\ - \ \frac{\eps}{16}\sum_{n=0}^\infty \frac{C_n}{4^n} \left(\frac{\eps}{8}\right)^n \right)\!\cdot
$$
\end{theorem}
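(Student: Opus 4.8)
The plan is to reduce everything to the closed-form measure already recorded in Corollary~\ref{corollary:haar_delta} and then expand the square root with the Catalan series of Lemma~\ref{lemma:catalan_sqrt}. No new integration over a fiber is required here, in contrast to the rank-$2$ groups $G$ and $H$: since $\Delta$ is a rank-$1$ group, its trace measure was already obtained directly by pulling back the $\SU2$ measure along $s_1 = 2t$, and the task is simply to rewrite that measure in the endpoint coordinate and read off its Taylor coefficients.

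First I would pass to the shifted coordinate $\eps = s_1 + 4$, so that $s_1 = \eps - 4$ and $ds_1 = d\eps$, and compute the discriminant factor near the extremity $s_1 = -4$. A direct expansion gives $16 - s_1^2 = 16 - (\eps-4)^2 = 8\eps - \eps^2 = \eps(8-\eps)$, so that Corollary~\ref{corollary:haar_delta} becomes
$$
\mu_\Delta(\eps) = \frac{1}{8\pi}\sqrt{\eps(8-\eps)}\,d\eps.
$$
Writing $\eps(8-\eps) = 8\eps(1-\eps/8)$ and using $\sqrt{8}/8 = 1/\sqrt{8}$, this is exactly the first claimed expression $\mu_\Delta(\eps) = \tfrac{1}{\sqrt{8}\,\pi}\sqrt{\eps(1-\eps/8)}$.

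Next I would extract the factor $\eps^{1/2}$ and apply Lemma~\ref{lemma:catalan_sqrt} to the remaining factor $\sqrt{1-\eps/8}$ with $x = \eps/8$. This yields
$$
\sqrt{1 - \eps/8} = 1 - \frac{\eps}{16}\sum_{n=0}^\infty \frac{C_n}{4^n}\left(\frac{\eps}{8}\right)^n,
$$
and substituting this back produces the stated Taylor form. Convergence is immediate and needs only a one-line remark: for $\eps$ in the relevant neighborhood $[0,4]$ one has $\eps/8 \in [0,1/2]$, which lies strictly inside the radius of convergence $1$ of the series in Lemma~\ref{lemma:catalan_sqrt}.

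There is no genuine analytic obstacle; the single point requiring care is bookkeeping of the normalization. One must verify that $\sqrt{8}/8 = 1/\sqrt{8}$ in passing to the first displayed form, and that the factor $\tfrac12$ appearing in Lemma~\ref{lemma:catalan_sqrt} combines with the argument $\eps/8$ to give precisely the coefficient $\eps/16$ in front of the summation. I expect the entire argument to be a short substitution followed by a single application of the cited lemma.
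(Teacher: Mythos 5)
Your proposal is correct and follows essentially the same route as the paper: substitute $\eps = s_1 + 4$ into Corollary~\ref{corollary:haar_delta}, factor out $\eps^{1/2}$, and apply Lemma~\ref{lemma:catalan_sqrt} with $x = \eps/8$; your normalization bookkeeping ($\sqrt{8}/8 = 1/\sqrt{8}$ and the $\eps/16$ coefficient) matches the paper's computation exactly. The only cosmetic difference is that the paper records convergence in a separate remark, noting it holds on the full range $\eps \in [0,8]$ (i.e.\ $s_1 \in [-4,4]$), slightly stronger than the interval $[0,4]$ you mention.
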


\begin{proof}
Substituting $\eps = s_1 + 4$ in Corollory~\ref{corollary:haar_delta} gives
$$
\mu_\Delta(\eps)
   = \frac{1}{8\pi}\sqrt{\eps(8-\eps)}\,ds_1
   = \frac{\eps^{1/2}}{\sqrt{8}\pi}\sqrt{1-\eps/8}\,ds_1.
$$
Applying Lemma~\ref{lemma:catalan_sqrt}, we obtain the desired expansion.
\end{proof}

\noindent
By the previous remark, this gives convergence for $\eps$ in $[0,8]$ and
thus $s_1$ in $[-4,4]$.
\vspace{2mm}

We recall that the density function of the trace function on $\Delta$ is the
function $f_\Delta(x)$ such that $\mu_\Delta(s_1) = f_\Delta(s_1)\,ds_1$.
Either from the exact form of Corollary~\ref{corollary:haar_delta},
$$
f_\Delta(x) = \frac{1}{8\pi}\sqrt{16-x^2},
$$
or the above series expansion, we can graph the density function $f_\Delta(x):$
\begin{center}
\scalebox{0.50}{\sageplot{plot_f_SU2_diag}}
\end{center}
and its cumultative density function $F_\Delta(x) = \int_0^x f_\Delta(t)\,dt$:
\begin{center}
\scalebox{0.50}{\sageplot{plot_F_SU2_diag}}
\end{center}
In the next section we recall results of Gilles Lachaud giving exact functional
expressions for the density functions of the trace on the groups $G$ and $H$.

%%%%%%%%%%%%%%%%%%%%%%%%%%%%%%%%%%%%%%%%%%%%%%%%%%%%%%%%%%%%%%%%%%%%%%%%%%%%%%%%
%%%%%%%%%%%%%%%%%%%%%%%%%%%%%%%%%%%%%%%%%%%%%%%%%%%%%%%%%%%%%%%%%%%%%%%%%%%%%%%%
\section{On results of Lachaud}
\label{section:lachaud}
%%%%%%%%%%%%%%%%%%%%%%%%%%%%%%%%%%%%%%%%%%%%%%%%%%%%%%%%%%%%%%%%%%%%%%%%%%%%%%%%
%%%%%%%%%%%%%%%%%%%%%%%%%%%%%%%%%%%%%%%%%%%%%%%%%%%%%%%%%%%%%%%%%%%%%%%%%%%%%%%%

In this section we detail results of Gilles Lachaud concerning the global
form of the trace function for the groups $G$ and $H$, growing out of his
interest in the theory of compact Lie groups and discussions with the
authors during the joint supervision of the Ph.D.\/ project of the second
author. His work gives an alternative approach to develop the local expansions
for the trace in the vicinity of $s_1 = \pm 4$.

We highlight certain main results in the presentation~\cite{Lachaud-ATI}
and preprint~\cite{Lachaud-SU2xSU2} of Lachaud.  Following the notation
of this work, we denote by $f_G(x)$ and $f_H(x)$ the distribution functions
for the trace on $G$ and $H$ such that
$$
\mu_G(s_1) = f_G(s_1) ds_1 \mbox{ and } \mu_H(s_1) = f_H(s_1) ds_1.
$$
The first main result is an exact form for the distribution function
for $G = \USp4$.

\begin{theorem}[Lachaud~\cite{Lachaud-ATI}]
\label{theorem:Lachaud-USp4}
For $G = \USp4$, if $|x| < 4$, the distribution function $f_G$ of the trace function
is given by
$$
f_G(x) = -\frac{64}{15\pi}\sqrt{|x|}\left(1 - \frac{x^2}{16}\right)^2
  \mathcal{P}_{\frac{1}{2}}^2\left(\frac{x^2 + 16}{4x}\right)
$$
where
$$
\mathcal{P}_{b}^a(z) = \frac{\Gamma(a+b+1)}{2\pi\Gamma(b+1)}
  \int_0^{2\pi} \left(z + \sqrt{z^2-1} \cos(\varphi)\right)^b \cos(a\varphi)d\varphi.
$$
\end{theorem}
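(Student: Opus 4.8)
The plan is to realize $f_G$ as the marginal density of $\mu_G$ along the trace axis, integrating out the fibre variable $\delta_0 = t_1 - t_2 \ge 0$ by means of the $(s_1,\delta_0)$-form of the measure derived in Section~\ref{section:weyl_integration}. Taking $x>0$ without loss of generality (the case $x<0$ following from the evenness $f_G(-x)=f_G(x)$), fix $s_1 = x$ with $|x|<4$; the fibre of $\fD_2$ over $x$ is the segment $\delta_0 \in [0,\,4-x]$, so from $\mu_G(s_1,\delta_0) = \tfrac{1}{8\pi^2}\delta_0^2\sqrt{D_1}\,d\delta_0\,ds_1$ and the factored expression for $D_1$ I would write
\begin{equation*}
f_G(x) = \frac{1}{32\pi^2}\int_0^{b}\delta^2\sqrt{(c^2-\delta^2)(b^2-\delta^2)}\,d\delta,
\qquad b = 4-x,\quad c = 4+x.
\end{equation*}
This reduces the theorem to evaluating a single complete elliptic integral whose branch points are $\pm b,\pm c$, and the entire content of the statement is the identification of this integral with a value of $\mathcal{P}_{1/2}^2$.

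To expose the elliptic structure I would substitute $\delta = b\sin\psi$, obtaining
\begin{equation*}
\int_0^{b}\delta^2\sqrt{(c^2-\delta^2)(b^2-\delta^2)}\,d\delta
 = b^4 c\int_0^{\pi/2}\sin^2\psi\,\cos^2\psi\,\sqrt{1-k^2\sin^2\psi}\;d\psi,
\qquad k = \frac{b}{c} = \frac{4-x}{4+x}.
\end{equation*}
Using $1-k^2\sin^2\psi = (1-\tfrac{k^2}{2})+\tfrac{k^2}{2}\cos 2\psi$ and $\sin^2\psi\cos^2\psi = \tfrac18(1-\cos 4\psi)$, then setting $\varphi = 2\psi$ and extending over $[0,2\pi]$ by the evenness of the integrand, this becomes
\begin{equation*}
f_G(x) = \frac{b^4 c}{1024\,\pi^2}\int_0^{2\pi}\big(1-\cos 2\varphi\big)\sqrt{\Big(1-\tfrac{k^2}{2}\Big)+\tfrac{k^2}{2}\cos\varphi}\;d\varphi .
\end{equation*}
Because $\sqrt{A+B\cos\varphi}$ with $A=1-\tfrac{k^2}2$, $B=\tfrac{k^2}2$ is, up to the scalar $(1-k^2)^{1/4}$, exactly the integrand $\big(z'+\sqrt{z'^2-1}\cos\varphi\big)^{1/2}$ of the Laplace--Heine representation, the $\varphi$-integral splits as an $\RR$-linear combination of $\mathcal{P}_{1/2}^0(z')$ (from the constant term) and $\mathcal{P}_{1/2}^2(z')$ (from the $\cos 2\varphi$ term), where $z' = (2-k^2)/\big(2\sqrt{1-k^2}\big)$. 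The Gamma prefactor needed for $\mathcal{P}_{1/2}^2$ is supplied by $\Gamma(7/2)/\Gamma(3/2) = 15/4$.

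The remaining, essential step is to collapse this two-term combination at the modulus-$k$ argument $z'$ into the single term $\mathcal{P}_{1/2}^2(z)$ of the theorem, at the \emph{doubled} argument $z = \tfrac{x}{4}+\tfrac{4}{x} = v+v^{-1}$ with $v=|x|/4$. This is precisely a quadratic (Landen/Whipple-type) transformation of the associated Legendre function together with the contiguous relation linking $\mathcal{P}_{1/2}^{0}$ and $\mathcal{P}_{1/2}^{2}$; it is what simultaneously removes the $\mathcal{P}_{1/2}^0$ contribution, converts the fibre modulus $k=(4-x)/(4+x)$ into $z$, and turns the accumulated factors $(1-k^2)^{1/4}$ and $b^4c$ into the stated prefactor $\sqrt{|x|}\,(1-x^2/16)^2$ with constant $-64/(15\pi)$. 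I expect this transformation to be the main obstacle: the naive identification $\varphi=2\psi$ matches the integrands only after the quadratic change of argument, and it is here that the exact argument $z=(x^2+16)/(4x)$, the $\sqrt{|x|}$ weight, and the overall sign must be pinned down (the minus sign being what keeps $f_G\ge 0$, since the second Fourier cosine coefficient of a concave weight is negative).

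As independent checks fixing the normalization beyond doubt, I would verify that the claimed closed form reproduces the leading behaviour $f_G(\eps)\approx \eps^4/(64\pi)$ at $\eps = s_1+4\to 0$ of Theorem~\ref{theorem:haar_GH_eps_approx}, and that it agrees at sample points in $(-4,4)$ with the equivalent $E$--$K$ expression obtained by the elementary route: expanding $\sin^2\psi\cos^2\psi = \sin^2\psi-\sin^4\psi$ and applying the standard recursions for $\int_0^{\pi/2}\sin^{2j}\psi\sqrt{1-k^2\sin^2\psi}\,d\psi$, which collapse the fibre integral directly to a linear combination of the complete elliptic integrals $E$ and $K$. Either cross-check determines the constant $-64/(15\pi)$ unambiguously and confirms that the quadratic transformation has been carried out with the correct branch and endpoints.
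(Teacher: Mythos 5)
The paper offers no internal proof of this statement: Theorem~\ref{theorem:Lachaud-USp4} is imported from Lachaud's seminar talk and preprint, so your attempt can only be judged against the machinery of Sections~\ref{section:weyl_integration}--\ref{section:taylor_expansions}, with which your setup is fully consistent. The marginalization
$f_G(x) = \frac{1}{32\pi^2}\int_0^{b}\delta^2\sqrt{(c^2-\delta^2)(b^2-\delta^2)}\,d\delta$ with $b=4-x$, $c=4+x$ is the correct reading of $\mu_G(s_1,\delta_0)$ on $\fD_2$, the substitution $\delta=b\sin\psi$ and the modulus $k=b/c$ are right, and so is the rewriting of the weight as $(1-k^2)^{1/4}\bigl(z'+\sqrt{z'^2-1}\cos\varphi\bigr)^{1/2}$ with $z'=(2-k^2)/(2\sqrt{1-k^2})$; your asymptotic check against Theorem~\ref{theorem:haar_GH_eps_approx} also comes out correctly ($b^4c\,\pi/16$ in the inner integral gives $\eps^4/(64\pi)$ at the endpoint).

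However, there is a genuine gap, and it sits exactly where you yourself flag ``the main obstacle.'' What your computation actually yields is
$$
f_G(x) \;=\; \frac{b^4c\,(1-k^2)^{1/4}}{512\pi}\left(\mathcal{P}_{1/2}^0(z') - \frac{4}{15}\,\mathcal{P}_{1/2}^2(z')\right),
$$
a \emph{two-term} combination at the modulus-$k$ argument $z'$, whereas the theorem asserts a \emph{single} term $\mathcal{P}_{1/2}^2\bigl((x^2+16)/(4x)\bigr)$ at a different argument with the prefactor $-\frac{64}{15\pi}\sqrt{|x|}\,(1-x^2/16)^2$. The passage from the former to the latter is not a bookkeeping step: it is the entire analytic content of the theorem, and your proposal only asserts that a ``Landen/Whipple-type quadratic transformation together with a contiguous relation'' performs it, without exhibiting the identity or verifying that it simultaneously kills the $\mathcal{P}_{1/2}^0$ term, doubles the argument, and produces the stated prefactor and sign. (Your diagnosis that the transformation is quadratic is correct: with $k=(4-x)/(4+x)$ the ascending Landen map gives $2\sqrt{k}/(1+k)=\sqrt{1-x^2/16}$, which is why Lachaud's elliptic-integral corollary involves $m=1-x^2/16$ rather than $k^2$; but no single off-the-shelf formula is being quoted, and the requisite reduction through $E$ and $K$ is left undone.) Your proposed cross-checks cannot close this gap: matching the leading term at $\eps=0$ and numerical agreement at finitely many sample points would pin down constants only \emph{after} the functional form is established, and finitely many point evaluations of two real-analytic functions do not prove they coincide on $(-4,4)$. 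As written, the argument proves that $f_G$ equals an explicit Legendre-function combination at $z'$ --- a correct and useful intermediate statement, but not the theorem.
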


\noindent{\bf Remark.}
In particular for $z = (x^2+16)/4x$ and $(a,b) = (2,1/2)$, we have
$$
\mathcal{P}_{\frac{1}{2}}^2(z) = \frac{15}{8\pi}
  \int_0^{2\pi} \left(\frac{x^2+16 + \sqrt{(x^2 + 16)^2 - 16x^2} \cos(\varphi)}{4x}\right)^{1/2} \cos(2\varphi)d\varphi
$$
As a corollary, Lachaud finds an alternative expression for the trace distribution
$f_G$ in terms of the elliptic integrals of the first kind
$$
K(m) = \int_0^{\pi/2} \frac{d\varphi}{\sqrt{1-m\sin^2(\varphi)}}
     = \int_0^1 \frac{du}{\sqrt{(1-u^2)(1-mu^2)}}
     = \frac{\pi}{2}\,{}_2F_1\!\left(\frac{1}{2},\frac{1}{2};1;m\right)\!\ccomma
$$
and of the second kind:
$$
E(m) = \int_0^{\pi/2} \sqrt{1-m\sin^2(\varphi)}\,d\varphi
     = \int_0^1 \sqrt{\frac{1-mu^2}{1-u^2}}\,du
     = \frac{\pi}{2}\,{}_2F_1\!\left(\frac{1}{2},-\frac{1}{2};1;m\right)\!\cdot
$$
\begin{corollary}[Lachaud~\cite{Lachaud-ATI}]
For $G = \USp4$, the distribution function $f_G: [-4,4] \rightarrow \RR$
of the trace function is given by
$$
f_G(x) = \frac{64}{15\pi}\big( (m^2 - 16m + 16)E(m) - 8(m^2 - 3m + 2)K(m) \big),
$$
where $m = 1 - x^2/16$.
\end{corollary}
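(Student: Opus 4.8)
The plan is to obtain the closed form by integrating the $\USp4$ Haar measure directly along the $\delta_0$-fibre over $s_1 = x$; this is exactly what the integral representation of $\mathcal{P}_{1/2}^2$ in Theorem~\ref{theorem:Lachaud-USp4} computes, so one may equivalently reduce Lachaud's Legendre-function expression, but the direct route is more transparent. Marginalizing the expression $\mu_G(s_1,\delta_0) = \frac{1}{8\pi^2}\delta_0^2\sqrt{D_1}\,d\delta_0\,ds_1$ over $\delta_0$, and using $D_1 = \big((4+x)^2-\delta_0^2\big)\big((4-x)^2-\delta_0^2\big)/16$ together with the support $\delta_0 \in [0,4-|x|]$ read off from $\fD_2$, I would first reduce to
$$
f_G(x) = \frac{1}{32\pi^2}\int_0^{4-|x|} \delta_0^2 \sqrt{\big((4+x)^2-\delta_0^2\big)\big((4-x)^2-\delta_0^2\big)}\,d\delta_0.
$$

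Setting $a = 4-|x|$ and $b = 4+|x|$ (so $a \le b$, and $f_G$ manifestly depends only on $m = 1 - x^2/16$) and substituting $\delta_0 = a\sin\psi$ turns this into
$$
f_G(x) = \frac{a^4 b}{32\pi^2}\int_0^{\pi/2}\sin^2\psi\cos^2\psi\,\sqrt{1 - k^2\sin^2\psi}\;d\psi, \qquad k = \frac{a}{b} = \frac{4-|x|}{4+|x|}.
$$
The next step is routine: writing $\sin^2\psi\cos^2\psi = \sin^2\psi - \sin^4\psi$ and reducing $\int_0^{\pi/2}\sin^{2n}\psi\sqrt{1-k^2\sin^2\psi}\,d\psi$ by integrating the derivative of $\sin^{2n-1}\psi\cos\psi\,(1-k^2\sin^2\psi)^{3/2}$, one expresses the fibre integral as $\alpha(k^2)E(k^2) + \beta(k^2)K(k^2)$ with coefficients rational in $k^2$, in the parameter convention $E(m),K(m)$ of the excerpt.

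The crux is converting this to the modulus asserted in the statement, and here the key identity is the descending Landen (Gauss) transformation. With $\kappa = \sqrt m$ and complementary modulus $\kappa' = \sqrt{1-m} = |x|/4$, the descended modulus is
$$
\frac{1-\kappa'}{1+\kappa'} = \frac{4-|x|}{4+|x|} = k,
$$
which matches $k$ exactly. Thus the Landen relations for $K$ and $E$ express $K(k^2),E(k^2)$ through $K(m),E(m)$, and I would finish by collecting terms: the prefactor $a^4 b = (4-|x|)^4(4+|x|)$, the factors $(1+\kappa')^{\pm1}$ and $(1+\kappa)^{\pm1}$ from the transformation, and the coefficients $\alpha,\beta$ must combine into the polynomials $(m^2-16m+16)$ and $-8(m^2-3m+2)$ times $E(m)$ and $K(m)$ respectively, with the overall constant $64/(15\pi^2)$ (the power of $\pi$ in the displayed statement should read $\pi^2$, in agreement with the genus-$2$ normalization $\mu_G = \frac{1}{4\pi^2}\sqrt{D_0 D_1}\,ds_1ds_2$ and with the \texttt{f\_USp4} expression in the source).

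The main obstacle is precisely this last bookkeeping: one must check that all odd powers of $\kappa' = \sqrt{1-m}$, arising from the odd factors of $a = 4-|x|$, cancel against the Landen denominators so that the resulting coefficients are genuine polynomials in $m$ rather than in $\sqrt{1-m}$, and that the $\sqrt m$ factors likewise organize into integer powers of $m$. A secondary point needing care is the limit $x \to 0$, where $k \to 1$ is the singular modulus and $K(m)$ diverges at $m = 1$: one verifies that its coefficient $m^2-3m+2 = (m-1)(m-2)$ vanishes there, so that $f_G$ remains finite, with $f_G(0) = 64/(15\pi^2)$ in agreement with the direct evaluation of the fibre integral at $a = b = 4$.
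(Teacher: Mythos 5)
Your proposal is correct, but it follows a genuinely different route from the paper, which offers no derivation of its own: there the formula is presented as Lachaud's corollary of Theorem~\ref{theorem:Lachaud-USp4}, i.e.\ obtained by the classical reduction of the associated Legendre function $\mathcal{P}_{\frac{1}{2}}^2$ at argument $(x^2+16)/4x$ to complete elliptic integrals. You instead work entirely inside the paper's own coordinates: marginalizing $\mu_G(s_1,\delta_0)=\frac{1}{8\pi^2}\delta_0^2\sqrt{D_1}\,d\delta_0\,ds_1$ over the fibre $0\le\delta_0\le 4-|x|$ of $\fD_2$, substituting $\delta_0=(4-|x|)\sin\psi$, reducing to $E$ and $K$ at parameter $k^2$ with $k=(4-|x|)/(4+|x|)$, and then descending via the Landen--Gauss transformation. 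Your key identity is right: with $\kappa'=\sqrt{1-m}=|x|/4$ the descended modulus $(1-\kappa')/(1+\kappa')$ is exactly $k$, and the two delicate points you flag (cancellation of odd powers of $\kappa'$ so the coefficients are polynomials in $m$, and finiteness at $m=1$ where $(m-1)(m-2)K(m)\to 0$) are precisely where the bookkeeping must be checked. What your route buys is self-containedness (no Legendre functions) and transparency of the constants; what Lachaud's route buys is a compact global closed form and the link to his framework, from which the elliptic-integral form follows by a standard special-function identity rather than a fresh integration.

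Your correction of the constant is also right and worth recording: the displayed $\frac{64}{15\pi}$ should be $\frac{64}{15\pi^2}$. This is confirmed by three checks internal to the paper: the Sage definition of \texttt{f\_USp4} uses $64/(15\pi^2)$; the fibre integral at $x=0$ gives $f_G(0)=\frac{1}{32\pi^2}\int_0^4\delta_0^2(16-\delta_0^2)\,d\delta_0=\frac{64}{15\pi^2}$, which matches the stated formula at $m=1$ (where the bracket equals $E(1)=1$) only with the $\pi^2$ normalization; and expanding near $m=0$ one finds $(m^2-16m+16)E(m)-8(m^2-3m+2)K(m)=\frac{\pi}{2}\cdot\frac{15}{128}\,m^4+O(m^5)$, so with the $\pi^2$ constant and $m=\eps(8-\eps)/16$ one recovers exactly the leading term $\eps^4/(64\pi)$ of Theorem~\ref{theorem:haar_GH_eps_approx}, whereas the printed constant would be off by a factor of $\pi$.
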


It should be noted that the functions $K(m)$ and $E(m)$ admit well-known power series
representations in $m$:
$$
K(m) = \frac{\pi}{2} \sum_{n=0}^\infty \left( \frac{(2n)!}{4^n(n!)^2} \right)^{\!2}\!  m^n \mbox{ and }
E(m) = \frac{\pi}{2} \sum_{n=0}^\infty \left( \frac{(2n)!}{4^n(n!)^2} \right)^{\!2}\! \frac{m^n}{1-2n}
$$
which permits one to compute a power series representation in $m = 1-x^2/16$, from
which one can derive series expansions around $x = \pm 4$.  From the global form for
$f_G(x)$, one can compute a graphic representation for the density function:
\begin{center}
\scalebox{0.50}{\sageplot{plot_f_USp4}}
\end{center}
and its cumulative density function $F_G(x) = \int_0^x f_G(x) dx$:
\begin{center}
\scalebox{0.50}{\sageplot{plot_F_USp4}}
\end{center}

\begin{theorem}[Lachaud~\cite{Lachaud-SU2xSU2}]
\label{theorem:Lachaud-SU2xSU2}
For $H = \SUxSU2$, the density function $f_H:[-4,4] \rightarrow \RR$
of the trace function is given by
$$
f_H(x) = \frac{m^2}{2\pi}\,{}_2F_1\!\left(\frac{1}{2},\frac{3}{2};3;m\right)\!\ccomma
$$
where ${}_2F_1(a,b;c;z)$ is the hypergeometric function and $m = 1 - x^2/16$.
\end{theorem}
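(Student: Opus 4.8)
The plan is to recover $f_H$ by integrating the two-variable measure $\mu_H(s_1,\delta_0)$ over the fibre above a fixed trace $s_1 = x$, and then to match the resulting complete elliptic integral with the stated hypergeometric function. By the symmetry $x \mapsto -x$ I may assume $0 \le x < 4$, and I abbreviate $a = 4-x$ and $b = 4+x$, so that $a + b = 8$ and $m = 1 - x^2/16 = ab/16$.

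First I would carry out the fibre integration. Over the fibre above $x$ the variable $\delta_0$ ranges over $[0,4-x] = [0,a]$, and substituting the closed form $D_1 = (b^2 - \delta_0^2)(a^2 - \delta_0^2)/16$ recorded above into $\mu_H(s_1,\delta_0) = \tfrac{1}{4\pi^2}\sqrt{D_1}\,d\delta_0\,ds_1$ gives
\[
f_H(x) = \frac{1}{16\pi^2}\int_0^{a}\sqrt{(a^2 - \delta_0^2)(b^2 - \delta_0^2)}\,d\delta_0.
\]
This same integral is the self-convolution $f_H = f_K \ast f_K$ of the $\SU2$ semicircle density $f_K(t) = \tfrac{1}{2\pi}\sqrt{4-t^2}$ of Corollary~\ref{corollary:haar_su2}, reflecting the product structure of $H = \SUxSU2$ and the additivity $s_1 = t_1 + t_2$; this is the conceptual origin of the formula and an independent check on the constant.

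Next I would pass to hypergeometric form. The substitution $\delta_0 = a\sqrt{v}$ converts the fibre integral into an Euler-type integral,
\[
f_H(x) = \frac{a^2 b}{32\pi^2}\int_0^1 v^{-1/2}(1-v)^{1/2}\big(1 - (a/b)^2 v\big)^{1/2}\,dv = \frac{a^2 b}{64\pi}\,{}_2F_1\!\Big(-\tfrac12,\tfrac12;2;\tfrac{a^2}{b^2}\Big),
\]
the second equality being Euler's integral representation of ${}_2F_1$ with $B(\tfrac12,\tfrac32) = \tfrac{\pi}{2}$. One checks both forms at $x = 0$, where $\zeta = 1$, $m = 1$, and both sides give $f_H(0) = 8/(3\pi^2)$.

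The main obstacle is to reconcile the two moduli. Writing $\zeta = a/b$, the constraint $a + b = 8$ forces $m = 4\zeta/(1+\zeta)^2$, so passing from the modulus $a^2/b^2 = \zeta^2$ to the parameter $m$ is exactly a Landen change of modulus. Accordingly the identification of $\tfrac{a^2 b}{64\pi}\,{}_2F_1(-\tfrac12,\tfrac12;2;\zeta^2)$ with $\tfrac{m^2}{2\pi}\,{}_2F_1(\tfrac12,\tfrac32;3;m)$ is a quadratic (Landen-type) transformation of the hypergeometric function, together with the algebraic collapse of the prefactor $a^2 b$ against the powers of $(1+\zeta)$ that the transformation introduces. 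I expect this matching of moduli, rather than the elementary integration, to be the crux. As an alternative that sidesteps the explicit transformation, one may argue by moments: both densities are even and supported on $[-4,4]$, hence determined by their moments, and since the even moments of $f_K$ are the Catalan numbers $C_j$, the $(2N)$-th moment of $f_H = f_K \ast f_K$ equals $\sum_{j=0}^N \binom{2N}{2j} C_j C_{N-j}$; comparing this with the moments of $\tfrac{m^2}{2\pi}\,{}_2F_1(\tfrac12,\tfrac32;3;m)$, evaluated as Beta integrals in $m = 1 - x^2/16$, yields the claim.
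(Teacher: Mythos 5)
The paper itself contains no proof of this statement: it is quoted from Lachaud's unpublished preprint \cite{Lachaud-SU2xSU2}, so there is no internal argument to compare yours against. The closest material is Section~\ref{section:taylor_expansions}, and your first two steps reproduce that computation in different clothing: integrating $\mu_H(s_1,\delta_0)$ over the fibre $[0,a]$ and substituting $\delta_0 = a\sqrt{v}$ is the paper's parametrization $\delta_0 = \eps\lambda$ composed with $v = \lambda^2$ (via the symmetry $x \mapsto -x$ one has $a = \eps$, $b = 8-\eps$, $\zeta = \rho$), and your Euler-integral evaluation
$$
f_H(x) = \frac{a^2 b}{64\pi}\,{}_2F_1\!\left(-\tfrac12,\tfrac12;2;\zeta^2\right),
\qquad \zeta = a/b,
$$
is term-by-term identical to Theorem~\ref{theorem:haar_GH_eps}, since the Catalan series $1 - \frac{w}{8}\sum_{n\ge 0}C_nC_{n+1}w^n/16^n$ appearing there is exactly the hypergeometric series of ${}_2F_1(-\tfrac12,\tfrac12;2;w)$. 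Your convolution remark $f_H = f_K * f_K$ and the consistency check $f_H(0) = 8/(3\pi^2)$ are both correct.

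The step you leave as an expectation --- the Landen-type matching of moduli --- is thus the entire content of your proof beyond what the paper already establishes, and it does go through, but to have a proof you must invoke it explicitly. The identity needed is the classical quadratic transformation (Erd\'elyi, \textit{Higher Transcendental Functions} I, \S 2.11; DLMF \S 15.8)
$$
{}_2F_1\!\left(\alpha,\beta;2\beta;\frac{4z}{(1+z)^2}\right)
= (1+z)^{2\alpha}\,{}_2F_1\!\left(\alpha,\;\alpha-\beta+\tfrac12;\;\beta+\tfrac12;\;z^2\right),
$$
valid for $0 \le z < 1$ and extending to $z=1$ by continuity. Taking $(\alpha,\beta) = (\tfrac12,\tfrac32)$ and $z = \zeta$, and using your observation $m = ab/16 = 4\zeta/(1+\zeta)^2$, this gives ${}_2F_1(\tfrac12,\tfrac32;3;m) = (1+\zeta)\,{}_2F_1(-\tfrac12,\tfrac12;2;\zeta^2)$; since $m^2(1+\zeta) = 16\zeta^2/(1+\zeta)^3 = a^2b/32$, the prefactors collapse exactly as you predicted, yielding $\frac{m^2}{2\pi}{}_2F_1(\tfrac12,\tfrac32;3;m) = \frac{a^2b}{64\pi}{}_2F_1(-\tfrac12,\tfrac12;2;\zeta^2) = f_H(x)$. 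With that citation in place your argument is complete, and it supplies precisely the bridge the paper leaves implicit when it remarks that Lachaud's global formulas agree with the local series of Theorem~\ref{theorem:haar_GH_eps}. I would drop the moment-matching fallback: it is valid in principle (compactly supported densities are determined by their moments), but as sketched it merely trades the quadratic transformation for an equally nontrivial binomial--Catalan identity.
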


\noindent
From the explicit formula for the $f_H(x)$, we can compute its graphical representation:
\begin{center}
\scalebox{0.50}{\sageplot{plot_f_SU2_square}}
\end{center}
and its cumulative density function $F_G(x) = \int_0^x f_H(x) dx$:
\begin{center}
\scalebox{0.50}{\sageplot{plot_F_SU2_square}}
\end{center}

These results of Gilles Lachaud, in particular Theorems~\ref{theorem:Lachaud-USp4}
and~\ref{theorem:Lachaud-SU2xSU2}, give global expressions for the local power
series expressions for $\mu_G = f_G ds_1$ and $\mu_H = f_H ds_1$ in
Theorem~\ref{theorem:haar_GH_eps}, in terms of classical special functions.
Both approaches give exact convergent expressions for the density functions
$f_G$ and $f_H$ and cumulative density functions $F_G$ and $F_H$ for $s_1$
in the interval $[-4,4]$, or equivalently $\varepsilon$ in $[0,8]$.
In the next section we describe the application to a problem of extremal trace
distributions in families which in part motived each of Gilles and the authors
to carry out these respective computations of the trace distributions.

%%%%%%%%%%%%%%%%%%%%%%%%%%%%%%%%%%%%%%%%%%%%%%%%%%%%%%%%%%%%%%%%%%%%%%%%%%%%%%%%
%%%%%%%%%%%%%%%%%%%%%%%%%%%%%%%%%%%%%%%%%%%%%%%%%%%%%%%%%%%%%%%%%%%%%%%%%%%%%%%%
\section{Distribution of extremal traces in families}
\label{section:extremal_traces}
%%%%%%%%%%%%%%%%%%%%%%%%%%%%%%%%%%%%%%%%%%%%%%%%%%%%%%%%%%%%%%%%%%%%%%%%%%%%%%%%
%%%%%%%%%%%%%%%%%%%%%%%%%%%%%%%%%%%%%%%%%%%%%%%%%%%%%%%%%%%%%%%%%%%%%%%%%%%%%%%%

% This might form part of the introduction:
% \noindent{\bf General introduction:}\vspace{1mm} % COMMENT

Let $\cM_2$ be the moduli space of genus-$2$ curves, $\cH_D$ the Humbert surface
of genus-$2$ curves with RM by a real quadratic order of discriminant $D$ and
$\cX^N$ a Shimura curve of genus-$2$ curves with QM by an order of discriminant $N$
in which $D$ is inert or ramified. Achter and Howe~\cite{Achter-Howe} determine
bounds on the density of split abelian surfaces among all principally
polarized abelian surfaces over a finite field of $q$ elements.  Such split surfaces
are accounted for by points on Humbert surfaces with $D$ a square.  We are interested
in the asymptotic contribution of RM and QM points among abelian surfaces, such that
that the trace of Frobenius is close to the ends of the Weil interval $[-2\sqrt{q},2\sqrt{q}]$.
More generally, associated to the inclusions of moduli spaces
$
\cX^N \hookrightarrow \cH_D \hookrightarrow \cM_2
$
we pose the question of the asymptotic contribution to extremal traces of each moduli
space and the relative densities.

%\vspace{1mm}
%\noindent{\bf Transition to Sato--Tate groups:}\vspace{1mm} % COMMENT

The respective vertical Sato--Tate groups associated with generic endomorphism ring
is $G = \USp4$, for real multiplication is $H = \SUxSU2$ and for quaternionic
multiplication is $\Delta \isom \SU2 \subset H$.  A family of genus-$2$ curves with
parametrized RM or QM endomorphism ring structure, whose base les over a Humbert
surface or Shimura curve, will have trace distribution characteristic of $H$ or
$\Delta$.  As the respective trace distribution show (see
Corollary~\ref{corollary:haar_delta}, Theorems~\ref{theorem:haar_GH_eps},
\ref{theorem:Lachaud-USp4} and~\ref{theorem:Lachaud-SU2xSU2}),
such families tend to have a higher density of extremal traces.

\begin{center}
\scalebox{0.50}{\sageplot{plot_dists}}
\end{center}

As a consequence, the cumulative density function for the trace of Frobenius in
the neighborhood of $-4\sqrt{q}$, grows faster for $\Delta$ than for $H$ and
faster for~$H$ than for the generic group~$G$.

\begin{center}
\scalebox{0.50}{\sageplot{plot_cumul_dists}}
\end{center}

Specifically, in the following table, we summarize the first order approximations
to the growth of $\eps = s_1 + 4$ in neighborhood of $\eps = 0$ from
Theorems~\ref{theorem:haar_GH_eps_approx} and \ref{theorem:haar_delta_eps},
together with the associated orders of magnitude of points on the associated moduli spaces.
$$
\begin{array}{|l|l|l|}
\hline &  & \\[-4mm]
\displaystyle
f_G(\eps) \approx \frac{\eps^4}{64\pi} &
\displaystyle
F_G(\eps) \approx \frac{\eps^5}{320\pi} &
\big|\cM_2(\FF_q)\big| \approx q^3
\\[4mm] \hline &  & \\[-4mm]
\displaystyle
f_H(\eps) \approx \frac{\eps^2}{8\pi} &
\displaystyle
F_H(\eps) \approx \frac{\eps^3}{24\pi} &
\big|\cH_D(\FF_q)\big| \approx q^2
\\[4mm] \hline & & \\[-4mm]
\displaystyle
f_\Delta(\eps) \approx \frac{\eps^{1/2}}{\sqrt{8}\pi} &
\displaystyle
F_\Delta(\eps) \approx \frac{\eps^{3/2}}{3\sqrt{2}\pi} &
\big|\cX^N(\FF_q)\big| \approx q
\\[4mm] \hline
\end{array}
$$
Setting $\eps = d/\sqrt{q}$, where $d$ is the defect from the border $t = -4\sqrt{q}$
of the Weil interval, we observe the relative growth of the cumulative densities:
$$
\frac{F_\Delta(\eps)}{F_H(\eps)} = \frac{4\sqrt{2}}{d^{3/2}}\,q^{3/4} \mbox{ and }
\frac{F_H(\eps)}{F_G(\eps)} = \frac{40}{3d^2}\,q.
$$
Consequently a search of random points on a Shimura curve is more likely to exhibit
curves of extremal trace than on a Humbert surface or for a generic curve. Every
genus-$2$ curve with QM, however, has split Jacobian over a finite field.
Similarly, a search of random points on a Humbert surface is more likely to exhibit
curves of extremal trace than for a generic curve, and a general curve with moduli
on a Humbert surface has absolutely commutative endomorphism ring and nonsplit Jacobian.

Considering all curves over such spaces, we find that in the neighborhood of
$\eps = 0$, the rational points $\cH_D(\FF_q)$ of any single Humbert surface
dominates the extremal traces within the full set of points $\cM_2(\FF_q)$ on
the full moduli space.  Specifically, with $\eps = d/\sqrt{q}$, as above,
we have
$$
\begin{array}{|r@{\;}c@{\;}c@{\;}c@{\;}c|}
\hline &  &  &  & \\[-4mm]
\displaystyle
F_G(\eps) \big|\cM_2(\FF_q)\big| & \approx &
\displaystyle
\frac{\eps^5}{320\pi} q^3  & = &
\displaystyle
\frac{d^5}{320\pi} \sqrt{q}
\\[4mm] \hline &  &  &  & \\[-4mm]
\displaystyle
F_H(\eps) \big|\cH_D(\FF_q)\big| & \approx &
\displaystyle
\frac{\eps^3}{24\pi} q^2  & = &
\displaystyle
\frac{d^3}{24\pi} \sqrt{q}
\\[4mm] \hline &  &  &  & \\[-4mm]
\displaystyle
F_\Delta(\eps) \big|\cX^N(\FF_q)\big| & \approx &
\displaystyle
\frac{\eps^{3/2}}{3\sqrt{2}\pi} q & = &
\displaystyle
\frac{d^{3/2}}{3\sqrt{2}\pi} \sqrt[4]{q}
\\[4mm] \hline
\end{array}
$$
Since $1/320\pi < 1/24\pi$, this gives an apparent paradox for any defect
$d < \sqrt{40/3} < 3.6515$ from the end of the Weil interval.  This shows
clearly that the vertical Sato--Tate distribution for the trace of Frobenius
on $[-4,4]$, as a limit in $q$, does not allow one to conclude existence
of points in short intervals (of length $O(1/\sqrt{q})$).  Moreover, for
particular real discriminant $D$, we prove a proposition concerning the
discrete structure of moduli points in $\cH_D(\FF_q)$, which shows the
dependence of the arithmetic of the Galois representations of Frobenius
with the geometry of $\cH_D$.

\begin{proposition}
\label{proposition:exclusion_zone}
Suppose $A/\FF_q$ is a Jacobian surface such that $\End(A)$ is commutative.
Then there exists a unique $D$ such that the moduli point of $A$ lies in
$\cH_D(\FF_q) \subset \cM_2(\FF_q)$, and the associated point $(s_1,\delta_0)$
in $\fD_2$ satisfies
$$
\delta_0 = m_0\sqrt{\frac{D}{q}} \le \eps,
$$
where $\eps = 4 - |s_1|$ and $m_0 > 0$ is an integer.  In particular,
the defect $d = \eps\sqrt{q}$ with respect to the Weil bound satisfies
$\sqrt{D} \le d$.
\end{proposition}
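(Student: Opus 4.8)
The plan is to read the geometric coordinate $\delta_0$ arithmetically, as the square root of the discriminant of the real multiplication order attached to $A$, and then to obtain both the identification of $D$ and the inequality directly from this. First I would pass to the real Weil polynomial. Writing $\chi_C(x) = x^4 - a_1x^3 + a_2x^2 - a_1qx + q^2$ for the characteristic polynomial of $\pi$ and letting $\bar\pi$ denote its Rosati conjugate, so that $\pi\bar\pi = q$, the totally real element $\beta = \pi + \bar\pi$ has characteristic polynomial $h(x)^2$ on $A$, where $h(x) = x^2 - a_1x + (a_2 - 2q)$, and its roots are the unnormalized real traces $\tau_i = \sqrt{q}\,t_i$. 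A direct computation gives $\disc(h) = a_1^2 - 4a_2 + 8q = q(s_1^2 - 4s_2) = q\,\delta_0^2$, so that $\disc(\ZZ[\beta]) = \disc(h) = q\,\delta_0^2$. This is the bridge between the coordinate $\delta_0 \in \fD_2$ and the arithmetic of $\ZZ[\beta]$.

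Next I would identify $D$ and settle existence and uniqueness. Commutativity of $\End(A)$ forces $\beta \notin \QQ$: otherwise $\pi$ satisfies the quadratic $x^2 - \beta x + q$, so $A$ is isogenous to the square of an elliptic curve and $\End^0(A)$ contains a matrix algebra, contradicting commutativity. Hence, using semisimplicity of Frobenius, $K = \QQ(\beta)$ is a real quadratic étale algebra — either a real quadratic field, or the split algebra $\QQ\times\QQ$ of fundamental discriminant $1$ when $A$ is isogenous to a product, which recovers the square discriminants noted in the introduction. The order $\cO = \End(A) \cap K$ is then a Rosati-stable real quadratic order, defined over $\FF_q$; it is the full real multiplication order of $A$, and I would set $D = \disc(\cO)$. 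Since $\cO$ is canonically determined by $A$, this $D$ is the unique discriminant for which $A$ carries real multiplication by the full order $\cO_D$, placing its moduli point in $\cH_D(\FF_q)$, while any other $\cH_{D'}$ through the point corresponds to a proper suborder of $\cO$.

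It then remains to extract $m_0$ and assemble the inequality. Since $\ZZ[\beta] \subseteq \cO$, the standard relation for orders in a quadratic algebra gives $\disc(\ZZ[\beta]) = [\cO : \ZZ[\beta]]^2\,\disc(\cO)$; setting $m_0 = [\cO : \ZZ[\beta]] \ge 1$ and combining with the first step yields $q\,\delta_0^2 = m_0^2 D$, that is $\delta_0 = m_0\sqrt{D/q}$ with $m_0$ a positive integer. For the bound, Weil's theorem (the Riemann hypothesis for $A$) gives $t_i \in [-2,2]$, so $(s_1,\delta_0) \in \fD_2$; but the defining inequalities $\delta_0 \pm s_1 \le 4$ of $\fD_2$ are exactly $\delta_0 \le 4 - |s_1| = \eps$. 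Using $m_0 \ge 1$ we conclude $\sqrt{D/q} \le \delta_0 \le \eps = d/\sqrt{q}$, and multiplying through by $\sqrt{q}$ gives $\sqrt{D} \le d$. I expect the genuine obstacle to be the structural middle step: verifying that $\cO = \End(A)\cap K$ really is a real quadratic order carrying a well-defined Humbert discriminant (ruling out $\beta \in \QQ$ via commutativity, and treating the split and field cases uniformly) and pinning down the moduli-theoretic sense in which $D$ is unique. The discriminant computation and the final inequality are then routine, the latter amounting to no more than the shape of the fundamental domain $\fD_2$ together with $m_0 \ge 1$.
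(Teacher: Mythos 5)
Your proof is correct and takes essentially the same route as the paper: your order $\cO = \End(A)\cap\QQ(\pi+\bar{\pi})$ is precisely the paper's Rosati-fixed subring $\End^s(A)$, and both arguments set $D = \disc(\cO)$, obtain $q\,\delta_0^2 = \disc(\ZZ[\pi+\bar{\pi}]) = m_0^2 D$ via the index $m_0 = [\cO:\ZZ[\pi+\bar{\pi}]]$, and deduce the bound from the defining inequalities of $\fD_2$. Your explicit computation of $\disc(h)$ and the exclusion of $\beta\in\QQ$ simply spell out steps the paper's case dichotomy (simple CM versus product of nonisogenous elliptic curves) leaves implicit.
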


\begin{proof}
Since $\End(A)$ is commutative, either $A$ is simple and $\End(A)$
is  an order in a CM field, or $A$ is isogenous to a product of
nonisogenous elliptic curves and $\End(A)$ is a suborder of a product
of imaginary quadratic orders.
Let $\End^s(A)$ be the subring fixed by the Rosatti involution,
containing $\ZZ[\pi+\bar{\pi}]$.  Then $\End^s(A)$ is a quadratic order
of discriminant $D$.
If $A$ is simple, then $\End^s(A)$ is a real quadratic order, and otherwise
$\End(A)^s$ is an order the \'etale algebra $\QQ \times \QQ$ of discriminant
$D = m^2$.  The subring $\ZZ[\pi+\bar{\pi}]$ is then a suborder of index
$m_0$ and discrimnant $m_0^2 D$. In view of the optimal embedding of
$\End^s(A)$ in $\End(A)$, the moduli point of $A$ lies in $\cH_D(\FF_q)$.
By definition
$
\delta_0^2 q = m_0^2 D = \disc(\ZZ[\pi+\bar{\pi}]),
$
and the bounds follow since $(s_1,\delta_0)$ is a point in $\fD_2$.
\end{proof}

\noindent{\bf Remark.}
The condition that $\End(A)$ is commutative includes all absolutely simple
abelian surfaces, as well as those isogenous to a product $E_1 \times E_2$
such that $E_1$ and $E_2$ are not isogenous.  The latter products arise from
abelian surfaces whose moduli are on Humbert surfaces of square discriminants.
In contrast, any abelian surface $A$ with quaternionic multiplication by
a maximal order $\cO_B$ in a quaternion algebra $B$ of discriminant $N$,
whose moduli point lies on a Shimura curve $X^N$, will have larger
endomorphism ring over the algebraic closure. Over an extension such an
abelian surface is isogenous to $E\times E$, where $\cO_K = \End(E)$ is the maximal
order in an imaginary quadratic field.  In terms of the domain $\fD_2$, for
any field $k/\FF_q$ such that $\End_k(A) = \End(A)$, the normalized Frobenius
characteristic polynomial is of the form $(x^2 - tx + 1)^2$, and the
associated point $(s_1,\delta_0)$ satisfies $\delta_0 = 0$.  Since infinitely
many real quadratic orders embed in $\cO \subset \End(A) \subseteq \MM_2(\cO_K)$,
the Shimura curve lies in the intersection of infinitely many Humbert surfaces $\cH_D$.

\begin{corollary}
\label{corollary:exclusion_disc_bound}
A Jacobian surface $A/\FF_q$ with absolutely commutative endomorphism ring
and defect $d = 4\sqrt{q} - |a_1|$ with respect to the Weil bound has moduli
in a unique Humbert surface $\cH_D(\FF_q)$ with $\sqrt{D} \le d$.
\end{corollary}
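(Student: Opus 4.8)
The plan is to deduce the statement directly from Proposition~\ref{proposition:exclusion_zone}, whose hypothesis the corollary merely strengthens. First I would observe that absolute commutativity of the endomorphism ring means $\End(A_{\bar{\FF}_q})$ is commutative, and hence so is its subring $\End(A)$ of $\FF_q$-rational endomorphisms; thus $A$ meets the hypothesis of Proposition~\ref{proposition:exclusion_zone}. The proposition then supplies a unique discriminant $D$ with moduli point in $\cH_D(\FF_q)$, together with the associated point $(s_1,\delta_0) \in \fD_2$ satisfying $\delta_0 = m_0\sqrt{D/q} \le \eps$, where $\eps = 4 - |s_1|$ and $m_0 \ge 1$ is a positive integer.

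The second step is a change of normalization. Since $s_1 = \tilde{a}_1 = a_1/\sqrt{q}$ is the normalized trace, we have $|s_1| = |a_1|/\sqrt{q}$, so the normalized defect $\eps = 4 - |s_1|$ rescales to
$$
d = \eps\sqrt{q} = \left(4 - \frac{|a_1|}{\sqrt{q}}\right)\sqrt{q} = 4\sqrt{q} - |a_1|,
$$
which is exactly the defect named in the statement. Because $m_0 \ge 1$, the bound $m_0\sqrt{D/q} \le \eps$ gives $\sqrt{D} \le \sqrt{q}\,\eps = d$; equivalently one reads this off the identity $\disc(\ZZ[\pi+\bar{\pi}]) = m_0^2 D = \delta_0^2 q$ from the proof of the proposition. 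The Humbert surface $\cH_D(\FF_q)$ is then the unique one already provided by Proposition~\ref{proposition:exclusion_zone}.

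The only point that requires care, and which I expect to be the genuine subtlety rather than the routine rescaling above, is the role of the stronger hypothesis in pinning down $D$ unambiguously and excluding the quaternionic case. As recorded in the remark after Proposition~\ref{proposition:exclusion_zone}, a surface with QM by a maximal order may have commutative $\FF_q$-rational endomorphism ring while its endomorphism algebra over $\bar{\FF}_q$ is a quaternion algebra; over an extension it becomes isogenous to $E \times E$, and its moduli point lies in the intersection of infinitely many Humbert surfaces. Requiring $\End(A_{\bar{\FF}_q})$ to be commutative forbids this, so that the quadratic order $\End^s(A)$ used in the proof coincides with its geometric counterpart and determines a single discriminant $D$. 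With this degeneracy excluded, the bound $\sqrt{D} \le d$ and the uniqueness of $\cH_D$ follow at once.
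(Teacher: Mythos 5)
Your proposal is correct and follows exactly the route the paper intends: the corollary is stated without separate proof as an immediate consequence of Proposition~\ref{proposition:exclusion_zone}, and your deduction (absolute commutativity implies commutativity of $\End(A)$, then the rescaling $d = \eps\sqrt{q} = 4\sqrt{q} - |a_1|$ together with $m_0 \ge 1$ yielding $\sqrt{D} \le d$) is precisely that consequence. Your closing observation on why the hypothesis is strengthened to \emph{absolutely} commutative also agrees with the paper's own remark about QM surfaces lying on infinitely many Humbert surfaces.
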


Proposition~\ref{proposition:exclusion_zone} asserts that the points $(s_1,\delta_0)$
in family over $\cH_D$ lie in discrete bands, and consequently, there is a zone of
exclusion : for $\delta_0 \ne 0$ then %$\delta_0 \ge \sqrt{D/q}$.
%However, for given $D$ there exists a zone of exclusion
$$
\delta_0^2 = s_1^2 - 4s_2 = m_0^2 \frac{D}{q} \ge \frac{D}{q}\cdot
$$
Moreover, the dense set of moduli points of Jacobian surfaces with absolutely
commutative endomorphism ring lie in disjoint sheets $\cH_D$ (which intersect
pairwise in unions of the Shimura curves of codimension $1$ whose associated
Frobenius representations constribute to the domain $\delta_0 = 0$).
This gives a stratification of $\cM_2$ into Humbert surfaces.  The Sato--Tate
distribution of $\USp4$ (over the generic space $\cM_2$) can be visualized
as layered in the axis $D$ by its contributions of Sato--Tate distributions
for $\SUxSU2$ with zone of exclusion $\delta_0^2 \ge D/q$.

\begin{center}
%\scalebox{0.50}{\sageplot{M2_strata}}
\scalebox{0.50}{\includegraphics{img/m2_strata-grad.png}}
\end{center}

This stratification of the moduli space $\cM_2$ by Humbert surfaces $\cH_D$
is reflected in the precise algebraic relation between the Haar measures
$\mu_G$ and $\mu_H$,
$$
\frac{\mu_G(t_1,t_2)}{\mu_H(t_1,t_2)}
  = \frac{\mu_G(s_1,s_2)}{\mu_H(s_1,s_2)}
  = \frac{\mu_G(s_1,\delta_0)}{\mu_H(s_1,\delta_0)}
  = \frac{D_0}{2}\ccomma
$$
a scalar multiple of the measure $\mu_H$, which measures the relative number of
strata $\cH_D$ at a given value of $D_0 = \delta_0^2$, contributing to the Haar
measure $\mu_G$ over $\cM_2$.

\vspace{2mm}
\noindent{\bf Example.} The fibration of $\cM_2/\FF_q$ into strata $\cH_D/\FF_q$,
and the discrete bands $\delta_0 = m_0 \sqrt{D/q}$ in $\cH_D(\FF_q)$ corresponding
to each index $m_0 = [\cO_D:\ZZ[\pi+\bar{\pi}]]$ can be visualised by plotting
the moduli points in $\cH_D(\FF_q)$.  For the prime $q = 47$, and discriminants
$D = 5$, $12$, $37$ and $97$, we have:

\begin{center}
\scalebox{0.33}{\sageplot{D2_ruled_05}} \quad
\scalebox{0.33}{\sageplot{D2_ruled_12}}
\end{center}

\begin{center}
\scalebox{0.33}{\sageplot{D2_ruled_37}}\quad
\scalebox{0.33}{\sageplot{D2_ruled_97}}
\end{center}

\section{Conclusion}

Based on the local expansions around the boundary of extremal traces ($s_1 = \pm4$)
for the degree-$4$ representations of $\USp4$, $\SUxSU2$ and $\SU2$, we derive
explicit Taylor series expansions in the neighborhood of the extremal trace
$s_1 = -4$ (and by symmetry around $s_1 = 4$).  In the application to curves
and Jacobians over finite fields, this corresponds to genus-$2$ curves with
many points (and with few points, respectively).
Noting that $\USp4$ is the generic Sato--Tate group for abelian surfaces, and
$\SUxSU2$ is the generic Sato--Tate group for abelian surfaces with RM, we
combine this quantitative result with a qualitative analysis of the
stratification of the moduli space $\cM_2$ (or $\cA_2$ of
principally polarized abelian surfaces) by Humbert surfaces parametrizing RM.
We use this to give a heuristic interpretation of the dominance of small RM
(including split abelian surfaces) among the Jacobians of curves with many points.

An interesting question remains the role of RM and more generally exceptional
endomorphisms in higher dimensions. From the Weyl integration formulas, one
derives similar expressions for the induced Haar measure on $\Sigma_g$.
As above, setting $t_j = 2\cos(\theta_j)$ and then
$$
D_0 = \prod_{j<k}(t_j-t_k)^2 \mbox{ and } D_1 = \prod_{j=1}^g(4-t_j^2),
$$
one finds for $G = \USp{2g}$,
$$
\mu_G(s_1,\dots,s_g) = \frac{1}{(2\pi)^g} \sqrt{D_0D_1}\,ds_1\cdots ds_g,
$$
and for $H = \SU2^g$,
$$
\mu_H(s_1,\dots,s_g) = \frac{g!}{(2\pi)^g} \frac{\sqrt{D_1}}{\sqrt{D_0}}\,ds_1\cdots ds_g.
$$
This gives an expression similar to that for $g = 2$:
\begin{equation}
\label{equation:ratio_G_to_H}
\frac{\mu_G(s_1,\dots,s_g)}{\mu_H(s_1,\dots,s_g)} = \frac{D_0}{g!}\cdot
\end{equation}
However, unlike in the case for $g = 2$, the single parameter $D = \disc(\cO)$
is not sufficient to uniquely characterize an order $\cO$ in a totally
real field or etale algebra.
Nevertheless, the Hilbert moduli space of a totally real order $\cO$ maps to
a $g$--dimensional subspace $\cH_g(\cO)$ of the $g(g+1)/2$--dimensional moduli
space $\cA_g$ of principally polarized abelian varieties of dimension $g$.
In general dimension we expect the ordinary points over $\FF_q$ to provide
the dominant contribution to the Haar measure, and such points partition
themselves into unique strata $\cH_g(\cO)$.  For abelian $3$--folds, this
gives a stratification of the $6$--dimensional space $\cA_3$ by $3$--folds
$\cH_3(\cO)$.

While the qualitative description is analogous, the interpretation of
the ratio~\eqref{equation:ratio_G_to_H} of Haar measures is not as obvious
since the discriminant $D$ doesn't characterize totally real orders of
rank $g > 2$.  Moreover the computation of an explicit Taylor series expansion
for the trace to quantify this qualitative contribution would be a much
more involved calculation, whose completion would nevertheless be interesting.

\ignore{
\section{Appendix}

Let $T = \SO2$, $K = \SU2$ and $T\times K = \SO2 \times \SU2$. The group $T$ arises
as the Sato--Tate group of a CM elliptic curve, which has a geometric moduli space of
dimension zero.  The product group $T \times K$ can arise as the Sato--Tate group

\begin{theorem}
The Haar measure on $\SO2 \times \SU2$ on $\fD_2$ is given by
$$
\frac{1}{64\pi^2} \sqrt{\frac{16 - (s_1 - \delta_0)^2}{16 - (s_1 + \delta_0)^2}}\, d\delta_0 ds_1
$$
\end{theorem}
}


\begin{thebibliography}{99}

\bibitem{Achter-Howe}
  J.~D.~Achter and E.~W.~Howe.
  Split abelian surfaces over finite fields and reductions of genus-2 curves,
  {\it Algebra Number Theory}, {\bf 11}, Number 1 (2017), 39--76.

\bibitem{Katz-Sarnak}
  N. Katz and P. Sarnak.
  {\it Random matrices, Frobenius eigenvalues, and monodromy},
  Colloquium Publications {\bf 45}, American Mathematical Society, 1999.

\bibitem{Lachaud-ATI}
  G. Lachaud.
  Distribution asympototique du nombre de points des courbes sur un corps fini,
  S\'eminaire ATI, 28 mars 2013.

\bibitem{Lachaud-SU2xSU2}
  G. Lachaud. Le groupe $\SU2\times\SU2$, preprint, 2014.

\bibitem{Oort}
  F. Oort.
  A stratification of a moduli space of abelian varieties,
  in C.~Faber, G.~van der Geer G., and F.~Oort, eds.,
  {\it Moduli of Abelian Varieties}, {\it Progress in Mathematics},
  {\bf 195}, (2001), 345--416.

\bibitem{Weyl}
  H. Weyl.
  {\it Classical groups: Their Invariants and Representations},
  Princeton University Press, 1961.

\end{thebibliography}
\end{document}